\newtheorem{lemma}{Lemma}[section]
\newtheorem{theorem}[lemma]{Theorem}
\newtheorem{corollary}[lemma]{Corollary}
\theoremstyle{remark}
\newtheorem{remark}[lemma]{Remark}
\newtheorem{problem}[lemma]{Problem}
\numberwithin{equation}{section}
\newcommand{\C}{\mathbb C}
\newcommand{\N}{\mathbb N}
\newcommand{\Root}{\mathcal R}
\newcommand{\R}{\mathbb R}
\newcommand{\rmi}{\mathrm{i}}
\newcommand*\xbar[1]{\hbox{\vbox{\hrule height 0.5pt \kern0.5ex\hbox{\kern-0.1em\ensuremath{#1}\kern-0.1em}}}}
\begin{document}

\author{Szymon Draga, Janusz Morawiec}
\title[Reducing the polynomial-like iterative equations order]{Reducing the polynomial-like iterative equations order and a generalized Zolt\'an Boros' problem}
\email[S. Draga]{szymon.draga@gmail.com}
\email[J. Morawiec]{janusz.morawiec@us.edu.pl}
\date{}
\address{Institute of Mathematics, University of Silesia, Bankowa 14, 40-007 Katowice, Poland}

\begin{abstract}
We present a technique for reducing the order of polynomial-like iterative equations; in particular, we answer a question asked by Wenmeng Zhang and Weinian Zhang. Our method involves the asymptotic behaviour of the sequence of consecutive iterates of the unknown function at a given point. As an application we solve a generalized problem of Zolt\'an Boros posed during the 50th ISFE (2012).
\end{abstract}

\keywords{polynomial-like equation, iterate, chracteristic equation, continuous solution, recurrence relation}
\subjclass[2010]{39B12, 39A06}

\maketitle

\section{Introduction}
Suppose $I\subset\R$ to be a non-degenerated interval and let $g\colon I\to I$ be a function. Assuming $g$ to be continuous we are interested in lowering the order of the equation
\begin{equation}\label{main-equation}
a_Ng^N(x)+\ldots+a_1g(x)+a_0x=0,
\end{equation}
where $N$ is a positive integer, the coefficients $a_0,a_1,\ldots,a_N$ are real and $a_0\neq0$; here and throughout this paper $g^n$ stands for the $n$-th iterate of $g$. Note that $a_0\neq0$ implies that $a_n\neq0$ for some $n=1,\ldots,N$. From now on we assume that $a_N\neq0$. It turns out that continuous solutions to \eqref{main-equation} deeply depend on the roots of its characteristic equation
\begin{equation}\label{characteristic-equation}
a_Nr^N+\ldots+a_1r+a_0=0,
\end{equation}
which usually is obtained by assuming that $g$ has the form $g(x)=rx$. Up to now the case $N=2$ is the only non-trivial one which has been completely solved (see \cite{nabeya}). In fact, the problem still remains open even for $N=3$ (see \cite{matkowski}). These difficulties follow from the non-linearity of the operator $g\mapsto g^n$. Nonetheless, a lot of investigation was done in this matter; see a survey on functional equations with superpositions of the unknown function \cite[Section 3]{baron-jarczyk} and a survey on iterative equations of polynomial type \cite{zhang-yang-zhang}.

One of methods for finding solutions to equation \eqref{main-equation}, and also to its non-homogenous counterpart, where zero on the right-hand side is replaced by an arbitrary continuous function, is based on lowering its order. First such results on the whole real line were obtained in \cite{matkowski-zhang1} in the case where all roots of the characteristic equation are real and satisfy some special conditions. Further research in this direction was done in \cite{yang-zhang,zhang-gong,zhang-zhang}, but still most cases remain unsolved. For some exploration of equation \eqref{main-equation} on intervals see \cite{li-zhang,mukherjea-ratti,ratti-lin,zhang-baker,zhang-xu-zhang} and on half-lines see \cite{jarczyk}.

Let us note that \eqref{characteristic-equation} also can be considered as the characteristic equation of the recurrence relation
\begin{equation}\label{recurrence-relation}
a_Nx_{m+N}+\ldots+a_1x_{m+1}+a_0x_m=0
\end{equation}
which might be obtained by choosing $x_0\in I$ arbitrarily and putting $x_m=g(x_{m-1})$ for all $m\in\N$. Such an approach we will examine in the present paper.

It can be easily observed that if a polynomial $b_Mr^M+\ldots+b_1r+b_0$ divides a polynomial $a_Nr^N+\ldots+a_1r+a_0$ and a function $g$ satisfies the equation
\begin{equation}\label{reduced-equation}
b_Mg^M(x)+\ldots+b_1g(x)+b_0x=0,
\end{equation}
then it satisfies also \eqref{main-equation}; for a simple proof see \cite{matkowski-zhang2}. The main objective of this paper is to give particular conditions under which a converse holds, i.e., we want to find some conditions guaranteeing that if a continuous function $g$ satisfies \eqref{main-equation}, then there is a divisor of $a_Nr^N+\ldots+a_1r+a_0$ such that $g$ is a solution to the corresponding iterative equation of lower order. Of course, it is not possible in each case. For instance, we may consider the equation $g^2(x)-2g(x)+x=0$ whose continuous solution (on the whole real line) is $g(x)=x+c$, where $c$ is a constant.

The paper is organized as follows. Section \ref{preliminaries} contains basic properties of solutions to considered equations and preliminary information on linear homogenous recurrence relations. In Section \ref{nonreal-elimination} we prove theorems on eliminating non-real roots from the characteristic equation which show that, in new crucial cases, equation \eqref{main-equation} is equivalent to an equation of lower order. In particular, we generalize results from \cite{zhang-zhang} and give an answer to the question posed in the last section of that paper. In Section \ref{opposite-elimination} we obtain similar results as in the previous section, but we eliminate real roots of opposite sign. These theorems allow us to solve, in Section \ref{boros-section}, a generalized problem of Zolt\'an Boros. Namely, for a given integer $n\ge3$ we determine all continuous self-mapping functions $f$ acting on an interval and satisfying
\begin{equation}\label{zoltan-equation}
f^n(x)=\frac{[f(x)]^n}{x^{n-1}}.
\end{equation}
For $n=2$ the equation above can be solved using the mentioned result of Nabeya or \cite{morawiec}. In the last section we discuss further problems.

\section{Preliminaries}\label{preliminaries}
As we mentioned before we assume that $a_0\neq0$.

\begin{lemma}\label{lemma-injective}
If a function $g$ satisfies \eqref{main-equation}, then it is injective.
\end{lemma}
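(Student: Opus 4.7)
The plan is to use the hypothesis $a_0\neq 0$ directly, with the observation that once the values $g(x)$ and $g(y)$ coincide, then so do all higher iterates $g^n(x)=g^n(y)$ for every $n\ge 1$.

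Concretely, I would assume $g(x)=g(y)$ for some $x,y\in I$ and apply $g$ repeatedly to obtain $g^n(x)=g^n(y)$ for all $n=1,\ldots,N$. Substituting both $x$ and $y$ into \eqref{main-equation} and subtracting the resulting identities, every term involving an iterate cancels, leaving $a_0(x-y)=0$. Since $a_0\neq 0$, this forces $x=y$, proving injectivity.

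There is no real obstacle here; the only assumption used is $a_0\neq 0$, which is exactly why it was singled out in the setup. Continuity of $g$ is not needed for this step.
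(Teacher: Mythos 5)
Your proposal is correct and essentially identical to the paper's argument: the paper simply solves \eqref{main-equation} for $x$ as $x=-\frac{1}{a_0}\bigl(a_Ng^N(x)+\ldots+a_1g(x)\bigr)$ and notes this expression coincides for $x$ and $y$ once $g(x)=g(y)$, which is the same cancellation you perform by subtraction. No gap; continuity is indeed not needed here.
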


\begin{proof}
Choose $x,y\in I$ and suppose that $g(x)=g(y)$. Then
\[ x=-\frac{1}{a_0}\left(a_Ng^N(x)+\ldots+a_1g(x)\right)=-\frac{1}{a_0}\left(a_Ng^N(y)+\ldots+a_1g(y)\right)=y.\qedhere \]
\end{proof}

The foregoing lemma directly implies that every continuous solution to \eqref{main-equation} is strictly monotone. It means that the sequence $(x_m)_{m\in\N_0}$ given by $x_0\in I$ and $x_m=g(x_{m-1})$ for all $m\in\N$ is either monotone (in the case of increasing $g$) or anti-monotone (in the case of decreasing $g$). By \textit{anti-monotone} we mean that the expression $(-1)^m(x_m-x_{m-1})$ does not change its sign when $m$ runs through $\N_0$.

In the case where $g$ is bijective $h=g^{-1}$ is also a self-mapping function acting on $I$. Putting $g^{-N}(x)$ in place of $x$ in \eqref{main-equation} we obtain the equation
\begin{equation}\label{dual-equation}
a_0h^N(x)+\ldots+a_{N-1}h(x)+a_Nx=0
\end{equation}
which is called the dual equation. It is worth mentioning that if $r_1,\ldots,r_N$ are roots of \eqref{characteristic-equation}, then the roots of the characteristic equation of \eqref{dual-equation} are $r_1^{-1},\ldots,r_N^{-1}$. Whence if a bijective function $g$ satisfies \eqref{main-equation} and its inverse satisfies an iterative equation of a lower order, then the order of \eqref{main-equation} can be reduced. Moreover, a root from the characteristic equation can be eliminated if and only if its inverse can be eliminated from the characteristic equation of the dual equation.

\begin{lemma}\label{lemma-recurrence}
A function $g$ satisfies equation \eqref{main-equation} if and only if for every $x_0\in I$ the sequence $(x_m)_{m\in\N_0}$ given by $x_m=g(x_{m-1})$ for all $m\in\N$ satisfies recurrence relation \eqref{recurrence-relation}.
\end{lemma}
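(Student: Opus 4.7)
The plan is to verify each direction by unwinding the definition of the sequence $(x_m)$ and using the identity $x_m=g^m(x_0)$, which follows immediately by induction from $x_0\in I$ and $x_m=g(x_{m-1})$.

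For the forward implication, I would fix $x_0\in I$ and an arbitrary $m\in\N_0$, and apply equation \eqref{main-equation} at the point $g^m(x_0)\in I$ (well-defined since $g\colon I\to I$). Because $g^k(g^m(x_0))=g^{k+m}(x_0)=x_{m+k}$ for each $k=0,1,\ldots,N$, the evaluation of \eqref{main-equation} at $g^m(x_0)$ becomes exactly
\[
a_Nx_{m+N}+\ldots+a_1x_{m+1}+a_0x_m=0,
\]
which is \eqref{recurrence-relation}. Since $m$ was arbitrary, the sequence satisfies the recurrence.

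For the reverse implication, I would argue that for each $x\in I$ the sequence generated by the choice $x_0=x$ (so that $x_m=g^m(x)$) must in particular satisfy the recurrence at index $m=0$. Writing this out yields
\[
a_Ng^N(x)+\ldots+a_1g(x)+a_0x=0,
\]
and since $x\in I$ was arbitrary, $g$ satisfies \eqref{main-equation}. No further indices are needed because the assumption is that the recurrence holds for \emph{every} starting point $x_0$.

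There is really no serious obstacle here; the only thing worth being careful about is the bookkeeping, namely ensuring that the relation $x_{m+k}=g^k(x_m)$ (used for $k=0,\ldots,N$) follows cleanly from the definition and from $g$ mapping $I$ into itself, so that every iterate appearing in the argument is well defined. Once this is noted, both implications reduce to substitution.
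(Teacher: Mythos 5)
Your argument is correct: the forward direction follows by evaluating \eqref{main-equation} at $g^m(x_0)$ and the reverse by taking $m=0$ in the recurrence for an arbitrary starting point, which is exactly the substitution the paper has in mind (it states the lemma without proof, regarding it as immediate). Nothing is missing.
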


For the theory of linear recurrence relations see, for instance, \cite[\S 3.2]{jerri}. We will recall only the most significant theorem in this matter. In order to do this and simplify the writing we introduce the following notation. For a given polynomial $c_Kr^K+\ldots+c_1r+c_0$ we denote by $\Root(c_K,\ldots,c_0)$ the set $\{(r_1,k_1),\ldots,(r_p,k_p)\}$ of all pairs of pairwise distinct roots $r_1,\ldots,r_p$ and their multiplicities $k_1,\ldots,k_p$, respectively. Here and throughout the paper by a polynomial we mean a polynomial with real coefficients. Note that in the introduced notation $k_1+\ldots+k_p$ equals the degree of $c_Kr^K+\ldots+c_1r+c_0$. Moreover, $(\mu,k),(\xbar{\mu},k)\in\Root(c_K,\ldots,c_0)$ forces $\mu$ to be non-real.

\begin{theorem}\label{theorem-recurrence}
Assume
\[ \Root(a_N,\ldots,a_0)=\{(\lambda_1,l_1)\ldots,(\lambda_p,l_p),(\mu_1,k_1),(\xbar{\mu_1},k_1),\ldots,(\mu_q,k_q),(\xbar{\mu_q},k_q)\}. \]
Then a real-valued sequence $(x_m)_{m\in\N_0}$ is a solution to \eqref{recurrence-relation} if and only if it is given by
\[ x_m=\sum_{k=1}^pA_k(m)\lambda_k^m+\sum_{j=1}^q(B_j(m)\cos m\phi_j+C_j(m)\sin m\phi_j)|\mu_j|^m\quad\mbox{ for all }m\in\N_0, \]
where $A_k$ is a polynomial whose degree equals at most $l_k-1$ for $k=1,\ldots,p$ and $B_j,C_j$ are polynomials whose degrees equal at most $k_j-1$, with $\phi_j$ being an argument of $\mu_j$, for $j=1,\ldots,q$.
\end{theorem}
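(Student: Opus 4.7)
The plan is to work first over $\C$ and then specialize to real-valued sequences. The solution space of \eqref{recurrence-relation} over $\C$ is an $N$-dimensional vector space, since each solution is uniquely determined by its initial segment $x_0,\ldots,x_{N-1}$, and conversely any choice of initial values extends uniquely via the recurrence. On the other hand, $l_1+\ldots+l_p+2(k_1+\ldots+k_q)=N$, so it suffices to exhibit $N$ linearly independent complex solutions corresponding to the roots listed above.

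For this I would use the shift-operator factorization. Let $E$ be the shift operator $(Ex)_m=x_{m+1}$ and let $P(r)=a_Nr^N+\ldots+a_0$. Then \eqref{recurrence-relation} reads $P(E)(x_m)=0$. Since $P(E)$ factors into commuting terms $(E-r)^k$ corresponding to each root $r$ of multiplicity $k$, it is enough to verify the classical identity that $(E-r)^k$ annihilates $m^i r^m$ for $i=0,\ldots,k-1$; this is a direct computation via the Leibniz-type formula $(E-r)(m^i r^m)= r\,m^i\cdot[(1+1/m)^i-1]\cdot \text{lower order}$, or more cleanly by induction on $i$. Linear independence of the family $\{m^i r^m:(r,k)\in\Root(a_N,\ldots,a_0),\,0\le i<k\}$ follows from a Vandermonde argument applied to a hypothetical nontrivial linear combination: if the distinct $r$'s are $r_1,\ldots,r_s$ and the corresponding polynomial coefficients $Q_1,\ldots,Q_s$ are not all zero, one can divide by $r_t^m$ for a root with $|r_t|$ maximal and take $m\to\infty$ along a suitable subsequence to derive a contradiction, or equivalently apply the operators $(E-r_j)^{k_j}$ successively to isolate each summand.

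This yields an $N$-element basis of the complex solution space, so every complex solution has the form
\[ x_m=\sum_{k=1}^p \widetilde A_k(m)\lambda_k^m+\sum_{j=1}^q\bigl(\widetilde D_j(m)\mu_j^m+\widetilde E_j(m)\xbar{\mu_j}^m\bigr), \]
with complex polynomials of the appropriate degrees. Now I would impose that $(x_m)$ is real-valued. Since the basis elements attached to a conjugate pair $(\mu_j,\xbar{\mu_j})$ are themselves complex conjugate to each other, uniqueness of the coefficient vector forces $\widetilde A_k$ to have real coefficients and $\widetilde E_j=\overline{\widetilde D_j}$. Writing $\widetilde D_j=\tfrac12(B_j-\rmi\, C_j)$ with $B_j,C_j$ real polynomials of degree at most $k_j-1$, Euler's formula $\mu_j^m=|\mu_j|^m(\cos m\phi_j+\rmi\sin m\phi_j)$ transforms $\widetilde D_j(m)\mu_j^m+\overline{\widetilde D_j(m)\mu_j^m}$ exactly into $(B_j(m)\cos m\phi_j+C_j(m)\sin m\phi_j)|\mu_j|^m$. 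The converse direction is immediate, since each displayed term is manifestly a real combination of complex solutions.

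The main obstacle I anticipate is the linear independence step for roots of high multiplicity; the cleanest device is to apply $(E-r_j)^{k_j}$ to kill all summands except those attached to a chosen root, reducing to the single-root case where one divides by $r^m$ and exploits the fact that nonzero polynomials have only finitely many zeros. The passage from the complex form to the trigonometric form is routine once the coefficient-pairing $\widetilde E_j=\overline{\widetilde D_j}$ is established from the reality of $(x_m)$.
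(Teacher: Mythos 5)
Your argument is correct, but it is worth noting that the paper does not prove this statement at all: Theorem \ref{theorem-recurrence} is quoted as the classical structure theorem for linear homogeneous recurrences, with a citation to \cite[\S 3.2]{jerri}, so there is no in-paper proof to compare against. What you supply is essentially the standard textbook proof, and it is sound: the solution space over $\C$ has dimension $N$ because $a_N\neq0$ lets you extend any initial segment $x_0,\ldots,x_{N-1}$ uniquely; the factorization of $P(E)$ into commuting factors $(E-r)^k$ shows each $m^ir^m$, $0\le i<k$, is a solution (your one-line ``Leibniz-type'' display is garbled, but the induction on $i$ you fall back on is exactly right, using $(E-r)(m^ir^m)=r^{m+1}\bigl[(m+1)^i-m^i\bigr]$, a polynomial of strictly smaller degree times $r^m$); and linear independence is cleanest via your second device, applying $\prod_{j\neq t}(E-r_j)^{k_j}$ to isolate the summand at $r_t$ without changing its polynomial degree (the leading coefficient gets multiplied by $\prod_{j\neq t}(r_t-r_j)^{k_j}\neq0$), then using $r_t\neq0$ (which holds since $a_0\neq0$) and the fact that a nonzero polynomial has finitely many roots. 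The max-modulus limiting argument you mention first is shakier when several roots share the maximal modulus, so you are right to prefer the operator route. The realification step is also correct: conjugating the expansion of a real solution and invoking uniqueness of coordinates in the complex basis forces the coefficients at real roots to be real and pairs the coefficients at $\mu_j$ and $\xbar{\mu_j}$ as conjugates, after which $\widetilde D_j=\tfrac12(B_j-\rmi C_j)$ and Euler's formula give exactly the trigonometric form with the stated degree bounds, and the converse direction is immediate. So your proposal gives a complete, self-contained proof of a result the authors simply import; nothing in it conflicts with how the theorem is used later in the paper.
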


\section{Eliminating non-real roots}\label{nonreal-elimination}
We will make use of the following lemma whose proof can be found in \cite{tabor-tabor}.

\begin{lemma}\label{lemma-tabor}
Let the sequence $(x_m)_{m\in\N_0}$ be given by $x_m=\sum_{n=1}^N (a_n\cos m\phi_n+b_n\sin m\phi_n)$ for all $m\in\N_0$ with $a_n, b_n\in\R$ and $\phi_n\in(0,2\pi)$ for $n=1,\ldots,N$. If $\liminf_{m\to\infty}x_m\ge0$, then $x_m=0$ for all $m\in\N_0$.
\end{lemma}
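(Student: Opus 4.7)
The plan is to extract from the liminf hypothesis, via an orthogonality argument, that every coefficient $a_n$ and $b_n$ must vanish. Concretely, I would first show that the Ces\`aro mean of $x_m$ is zero, then promote this to the vanishing of the Ces\`aro mean of $|x_m|$ (and hence of $x_m^2$), and finally compute the latter mean directly. The starting observation is that each $\phi_n\in(0,2\pi)$ gives $e^{i\phi_n}\neq1$, so the partial sums $\sum_{m=0}^{M-1}e^{im\phi_n}$ are bounded uniformly in $M$ by $2/|1-e^{i\phi_n}|$. Taking real and imaginary parts, $\frac{1}{M}\sum_{m<M}\cos m\phi_n\to0$ and $\frac{1}{M}\sum_{m<M}\sin m\phi_n\to0$, whence $\frac{1}{M}\sum_{m<M}x_m\to0$.

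Next, the assumption $\liminf_m x_m\ge0$ forces the negative part $(x_m)_-=\max(0,-x_m)$ to satisfy $(x_m)_-\le\epsilon$ for all $m\ge M_0(\epsilon)$. Since $x_m$ is a sum of finitely many bounded trigonometric terms and is therefore bounded, this gives $\limsup_M\frac{1}{M}\sum_{m<M}(x_m)_-\le\epsilon$, and letting $\epsilon\downarrow0$ yields $\lim_M\frac{1}{M}\sum_{m<M}(x_m)_-=0$. Combining with the previous paragraph through $x_m=(x_m)_+-(x_m)_-$ shows $\lim_M\frac{1}{M}\sum_{m<M}(x_m)_+=0$, whence $\lim_M\frac{1}{M}\sum_{m<M}|x_m|=0$. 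Using the bound $x_m^2\le(\sup_k|x_k|)\cdot|x_m|$, this further yields $\lim_M\frac{1}{M}\sum_{m<M}x_m^2=0$.

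The final step is a direct computation of $\lim_M\frac{1}{M}\sum_{m<M}x_m^2$ via orthogonality. By combining terms with coinciding frequencies and using the substitution $\phi\mapsto2\pi-\phi$ (which only reverses the sign of the corresponding coefficient of $\sin$), one may assume the $\phi_n$ are pairwise distinct and lie in $(0,\pi]$. Expanding $x_m^2$ and applying product-to-sum identities, every cross term has argument $m(\phi_i\pm\phi_j)$ with $\phi_i\pm\phi_j\not\equiv0\pmod{2\pi}$, so its Ces\`aro mean vanishes; the diagonal contributions aggregate to $\sum_{\phi_n\in(0,\pi)}\frac{a_n^2+b_n^2}{2}+\sum_{\phi_n=\pi}a_n^2$. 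Equating this to zero forces every $a_n$ and $b_n$ to vanish, and hence $x_m=0$ for all $m\in\N_0$.

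The main subtlety lies in this last step: one must normalize the frequencies to be distinct in $(0,\pi]$ and handle the special case $\phi_n=\pi$ separately (where $\sin m\pi\equiv0$) before the orthogonality relations read off cleanly. The rest of the argument amounts to elementary Ces\`aro estimates together with the trivial pointwise inequality $x_m^2\le(\sup_k|x_k|)|x_m|$.
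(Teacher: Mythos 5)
Your proof is correct. There is, however, no in-paper argument to compare it with: the paper deliberately omits a proof of this lemma and simply quotes it from \cite{tabor-tabor}, so your Ces\`aro-mean/orthogonality argument serves as a complete, elementary and self-contained substitute. The chain of steps checks out: the bounded geometric sums $\sum_{m<M}e^{\rmi m\phi_n}$ give $\frac{1}{M}\sum_{m<M}x_m\to0$; the liminf hypothesis together with the boundedness of $(x_m)$ forces the Ces\`aro mean of the negative part, hence of $|x_m|$ and of $x_m^2$, to vanish; and after merging coinciding frequencies, folding $(\pi,2\pi)$ into $(0,\pi)$ via $\phi\mapsto2\pi-\phi$, and treating $\phi=\pi$ separately, the mean of $x_m^2$ is computed to be $\sum_{\phi\in(0,\pi)}\frac{A^2+B^2}{2}+\sum_{\phi=\pi}A^2$ in the combined coefficients, whose vanishing kills the sequence. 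One small wording caveat in your last step: what the computation actually forces is the vanishing of the \emph{combined} coefficients for each distinct frequency in $(0,\pi)$ and of the combined cosine coefficient at $\pi$; the sine coefficient at $\pi$, and the original uncombined $a_n,b_n$, need not vanish (e.g.\ two terms with equal frequency and opposite coefficients). Since $\sin m\pi\equiv0$, this still yields $x_m=0$ for all $m\in\N_0$, which is exactly what the lemma asserts, so the slip is harmless.
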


\begin{lemma}\label{lemma-complex}
Suppose $\lambda_1,\ldots,\lambda_N$ to be complex numbers such that $\lambda_n=r(\cos\phi_n+\rmi\sin\phi_n)$ with $r>0$ and $\phi_n\in(0,\pi)$ for $n=1,\ldots,N$. Morever, let $F\colon\N_0\to\R$ be a function such that $\lim_{m\to\infty}F(m)/r^m=0$ and let $A_n,B_n$ be polynomials for $n=1,\ldots,N$. If the sequence $(x_m)_{m\in\N_0}$ given by
\[ x_m=F(m)+\sum_{n=1}^N r^m(A_n(m)\cos m\phi_n+B_n(m)\sin m\phi_n)\quad\mbox{ for all }m\in\N_0 \]
is either monotone or anti-monotone, then $x_m=F(m)$ for all $m\in\N_0$.
\end{lemma}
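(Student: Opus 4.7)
The plan is to argue by contradiction: suppose not all of the polynomials $A_n, B_n$ vanish. After combining terms whose angles coincide I may assume that the $\phi_n$ are pairwise distinct. Let $d\ge 0$ be the largest degree appearing among the $A_n,B_n$, and let $a_n, b_n$ be the coefficients of $m^d$ in $A_n, B_n$, respectively; by the choice of $d$ at least one of the pairs $(a_n, b_n)$ is nonzero.

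The strategy is the same in both cases: construct from $(x_m)$ an auxiliary sequence of constant sign, still of the form treated by the lemma, and then close via a scaling limit and Lemma~\ref{lemma-tabor}. In the monotone case take $w_m = x_m - x_{m-1}$; in the anti-monotone case take $w_m = (-1)^m(x_m - x_{m-1})$. Using the angle-addition formulas, together with the identities $(-1)^m\cos m\phi = \cos m(\pi-\phi)$ and $(-1)^m\sin m\phi = -\sin m(\pi-\phi)$ in the anti-monotone case, I would verify the representation
\[ w_m = G(m) + r^m\sum_{n=1}^{N}\bigl(A_n^\ast(m)\cos m\psi_n + B_n^\ast(m)\sin m\psi_n\bigr), \]
where $\psi_n=\phi_n$ (monotone) or $\psi_n=\pi-\phi_n$ (anti-monotone) lies in $(0,\pi)$, the $A_n^\ast,B_n^\ast$ are polynomials of degree at most $d$, and $G(m)/r^m\to 0$ (which follows from $F(m)/r^m\to 0$). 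A short $2\times2$ matrix computation then shows that the linear map sending $(a_n,b_n)$ to the leading coefficients $(a_n^\ast, b_n^\ast)$ of $(A_n^\ast, B_n^\ast)$ has determinant proportional to $r^2-2r\cos\phi_n+1=|re^{\rmi\phi_n}-1|^2$, which is strictly positive because $\phi_n\in(0,\pi)$ rules out $re^{\rmi\phi_n}=1$. Hence at least one of the new pairs $(a_n^\ast, b_n^\ast)$ remains nonzero.

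To finish, assume without loss of generality that $w_m\ge0$ for all $m$, and divide by $r^m m^d$. Since $A_n^\ast(m)/m^d\to a_n^\ast$, $B_n^\ast(m)/m^d\to b_n^\ast$, and $G(m)/(r^m m^d)\to 0$, one obtains
\[ \frac{w_m}{r^m m^d}-\sum_{n=1}^{N}\bigl(a_n^\ast\cos m\psi_n+b_n^\ast\sin m\psi_n\bigr)\longrightarrow 0, \]
so the $\liminf$ of the trigonometric sum is at least zero. Because $\psi_n\in(0,\pi)\subset(0,2\pi)$, Lemma~\ref{lemma-tabor} forces this sum to vanish identically, and the linear independence of $\{\cos m\psi_n,\sin m\psi_n\}_n$ for pairwise distinct $\psi_n\in(0,\pi)$ then yields $a_n^\ast=b_n^\ast=0$ for every $n$. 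Invertibility of the transformation gives $a_n=b_n=0$ for every $n$, contradicting the maximality of $d$.

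The hard part is the anti-monotone case: the auxiliary sequence $(-1)^m(x_m-x_{m-1})$ has merely constant sign, which is strictly weaker than monotonicity, so one cannot simply feed it back into the lemma. The angle reflection $\phi_n\mapsto\pi-\phi_n$ introduced by the factor $(-1)^m$ is precisely what keeps the new angles inside $(0,\pi)$, and the nondegeneracy of the associated $2\times2$ matrix, which relies on $\phi_n\in(0,\pi)$ to ensure $re^{\rmi\phi_n}\neq\pm 1$, is what prevents the key leading coefficients from being annihilated by the difference step.
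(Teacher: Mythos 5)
Your argument is correct, but it takes a genuinely different route from the paper's, most visibly in the anti-monotone case. The paper works with the sequence itself in the monotone case (a monotone sequence is eventually of one sign), normalizes by $r^m m^k$, applies Lemma~\ref{lemma-tabor} to annihilate the top-degree trigonometric layer, and then peels off the lower degrees iteratively; in the anti-monotone case it extracts two inequalities (even and odd steps) from $(-1)^m(x_m-x_{m-1})\ge0$, deduces that the sequence $r^m\sum_n\big(a_k^{(n)}\cos m\phi_n+b_k^{(n)}\sin m\phi_n\big)$ is constant, hence of constant sign, and only then invokes Lemma~\ref{lemma-tabor} again. You instead take first differences in both cases, absorb the factor $(-1)^m$ through the reflection $\phi_n\mapsto\pi-\phi_n$ (which indeed keeps the angles in $(0,\pi)$ via $(-1)^m\cos m\phi=\cos m(\pi-\phi)$, $(-1)^m\sin m\phi=-\sin m(\pi-\phi)$), and reach a one-shot contradiction at the maximal degree $d$, using two ingredients the paper never needs: the linear independence of the sequences $(\cos m\psi_n)_{m},(\sin m\psi_n)_{m}$ for pairwise distinct $\psi_n\in(0,\pi)$, and the nonsingularity of the $2\times2$ map on leading coefficients, whose determinant is proportional to $|r e^{\rmi\phi_n}-1|^2>0$ (in the anti-monotone case it is the negative of this quantity, which is all that is needed). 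Both facts are standard and your computations check out: $G(m)/r^m\to0$ is inherited from $F$, and the degree-$d$ coefficients of $A_n(m-1)$, $B_n(m-1)$ coincide with those of $A_n(m)$, $B_n(m)$, so the leading-coefficient map is exactly the matrix you describe. Your version buys a unified treatment of the two cases and dispenses with the degree-by-degree elimination, at the price of the independence and determinant arguments; the paper's version uses nothing beyond Lemma~\ref{lemma-tabor} but must handle the anti-monotone case by the more delicate constant-sequence trick. One cosmetic point: the hypothesis for contradiction should be that the trigonometric part is not identically zero as a sequence (equivalently, after merging coincident angles, not all combined polynomials vanish); if the merging cancels everything, the conclusion $x_m=F(m)$ holds outright rather than by contradiction.
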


\begin{proof}
Assume that $k$ is the greatest number from degrees of the polynomials $A_1,\ldots,A_N$ and $B_1,\ldots,B_N$. We may write $A_n(m)=\sum_{l=0}^k a^{(n)}_lm^l$ and $B_n(m)=\sum_{l=0}^k b^{(n)}_lm^l$ for $n=1,\ldots,N$, where some from the coefficients $a^{(n)}_k$'s and $b^{(n)}_k$'s are possibly zeros.

We first consider the case where $(x_m)_{m\in\N_0}$ is monotone. By considering the sequence $(-x_m)_{m\in\N_0}$ if necessary, we may asssume that $x_m\ge0$ for all but finitely many $m\in\N_0$. It means that the inequality
\[ \frac{F(m)}{r^m\cdot m^k}+\sum_{n=1}^N \left(\frac{A_n(m)}{m^k}\cos m\phi_n +\frac{B_n(m)}{m^k}\sin m\phi_n\right) \ge0 \]
holds for all but finitely many $m\in\N_0$. Consequently, we obtain
\[ \liminf_{m\to\infty}\sum_{n=1}^N \left(\frac{A_n(m)}{m^k}\cos m\phi_n+\frac{B_n(m)}{m^k}\sin m\phi_n\right) \ge0 \]
and hence
\[ \liminf_{m\to\infty}\sum_{n=1}^N \left(a_k^{(n)}\cos m\phi_n+b_k^{(n)}\sin m\phi_n\right) \ge0. \]
In view of Lemma \ref{lemma-tabor}
\begin{equation}\label{complex-step}
\sum_{n=1}^N \left(a_k^{(n)}\cos m\phi_n+b_k^{(n)}\sin m\phi_n\right)=0\quad\mbox{ for all }m\in\N_0,
\end{equation}
which obviously implies
\[\sum_{n=1}^N \left(a_k^{(n)}m^k\cos m\phi_n+b_k^{(n)}m^k\sin m\phi_n\right)=0\quad\mbox{ for all }m\in\N_0.\]
It means that we can eliminate terms with $m^k$ from $\sum_{n=1}^N (A_n(m)\cos m\phi_n+B_n(m)\sin m\phi_n)$ and now we can assume that the greatest number from degrees of the polynomials $A_n$'s and $B_n$'s is equal to $l\le k-1$. The same procedure shows that the terms with $m^l$ also can be eliminated. Continuing in this fashion, we obtain
\[ \sum_{n=1}^N (A_n(m)\cos m\phi_n+B_n(m)\sin m\phi_n)=0\quad\mbox{ for all }m\in\N_0. \]

Secondly, we consider the case where $(x_m)_{m\in\N_0}$ is anti-monotone. By considering the sequence $(-x_m)_{m\in\N_0}$ if necessary, we may asssume that $(-1)^m(x_m-x_{m-1})\ge0$ for all $m\in\N$. It implies that the inequality
\begin{multline*}
\frac{F(2m)-F(2m-1)}{r^{2m}\cdot (2m)^k}+\sum_{n=1}^N \left(\frac{A_n(2m)}{(2m)^k}\cos 2m\phi_n+\frac{B_n(2m)}{(2m)^k}\sin 2m\phi_n\right)\\
-\frac1r\sum_{n=1}^N \left(\frac{A_n(2m-1)}{(2m)^k}\cos(2m-1)\phi_n+\frac{B_n(2m-1)}{(2m)^k}\sin(2m-1)\phi_n\right) \ge0
\end{multline*}
holds for all $m\in\N$ and, as previously,
\begin{equation}\label{complex-tabor}
\begin{aligned}
\liminf_{m\to\infty} \sum_{n=1}^N \Big( &a_k^{(n)}\cos2m\phi_n+b_k^{(n)}\sin2m\phi_n\\
&-\frac1r a_k^{(n)}\cos(2m-1)\phi_n-\frac1r b_k^{(n)}\sin(2m-1)\phi_)\Big)\ge0.
\end{aligned}
\end{equation}
Since we have $\cos(2m-1)\phi_n=\cos\phi_n\cos2m\phi_n+\sin\phi_n\sin2m\phi_n$ and $\sin(2m-1)\phi_n=\cos\phi_n\sin2m\phi_n-\sin\phi_n\cos2m\phi_n$, limit \eqref{complex-tabor} is of the form as in Lemma \ref{lemma-tabor}. Therefore,
\[ \sum_{n=1}^N \left(a_k^{(n)}\cos2m\phi_n+b_k^{(n)}\sin2m\phi_n-\frac1r a_k^{(n)}\cos(2m-1)\phi_n-\frac1r b_k^{(n)}\sin(2m-1)\phi_n\right)=0 \]
and, consequently, the equality
\[ \sum_{n=1}^N \left(a_k^{(n)}\cos2m\phi_n+b_k^{(n)}\sin2m\phi_n\right)=\frac1r\sum_{n=1}^N \left(a_k^{(n)}\cos(2m-1)\phi_n+b_k^{(n)}\sin(2m-1)\phi_n\right) \]
holds for all $m\in\N$. In the same manner we can show that
\[ \sum_{n=1}^N \left(a_k^{(n)}\cos2m\phi_n+b_k^{(n)}\sin2m\phi_n\right)=r\sum_{n=1}^N \left(a_k^{(n)}\cos(2m+1)\phi_n+b_k^{(n)}\sin(2m+1)\phi_n\right). \]
Therefore, the sequence
\[ \left(r^m\sum_{n=1}^N \big(a_k^{(n)}\cos m\phi_n+b_k^{(n)}\sin m\phi_n\big)\right)_{m\in\N_0} \]
is constant. In particular, $\sum_{n=1}^N \big(a_k^{(n)}\cos m\phi_n+b_k^{(n)}\sin m\phi_n\big)$ has a constant sign. Applying Lemma \ref{lemma-tabor} once again, we conclude that \eqref{complex-step} holds. Further reasoning is exactly the same as in the foregoing case.
\end{proof}

Now we are in a position to prove one of our main results.
\begin{theorem}\label{complex-thm1}
Assume
\[ \Root(a_N,\ldots,a_0)=\{(\lambda_1,l_1)\ldots,(\lambda_p,l_p),(\mu_1,k_1),(\xbar{\mu_1},k_1),\ldots,(\mu_q,k_q),(\xbar{\mu_q},k_q)\}. \]
If 
\begin{equation}\label{complex-inequality1}
|\lambda_1|\le\ldots\le|\lambda_p|<|\mu_1|\le\ldots\le|\mu_q|,
\end{equation}
then a continuous function $g\colon I\to I$ satisfies equation \eqref{main-equation} if and only if it satisfies \eqref{reduced-equation} with
\[ \Root(b_M,\ldots,b_0)=\{(\lambda_1,l_1)\ldots,(\lambda_p,l_p)\}. \]
\end{theorem}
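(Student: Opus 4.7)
\bigskip
\noindent\textbf{Proof plan.}

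The ``if'' direction is immediate: the characteristic polynomial of the reduced equation, whose set of roots is $\{(\lambda_1,l_1),\ldots,(\lambda_p,l_p)\}$, divides $a_Nr^N+\ldots+a_0$, so by the cited result from \cite{matkowski-zhang2} any $g$ satisfying \eqref{reduced-equation} also satisfies \eqref{main-equation}. The content is the converse.

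For the ``only if'' direction I would fix an arbitrary $x_0\in I$ and consider the orbit $x_m=g^m(x_0)$. By Lemma \ref{lemma-recurrence} this sequence satisfies the recurrence \eqref{recurrence-relation}, and by Theorem \ref{theorem-recurrence} it has the closed form
\[
x_m=\sum_{k=1}^p A_k(m)\lambda_k^m+\sum_{j=1}^q|\mu_j|^m\bigl(B_j(m)\cos m\phi_j+C_j(m)\sin m\phi_j\bigr),
\]
where $\phi_j\in(0,\pi)$ is the argument of $\mu_j$. Since $g$ is continuous and, by Lemma \ref{lemma-injective}, injective, it is strictly monotone, and hence $(x_m)_{m\in\N_0}$ is either monotone or anti-monotone.

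The core of the proof is then a finite inductive ``peeling'' procedure that removes the oscillating contributions by decreasing modulus. Order the distinct values $r_1>r_2>\ldots>r_s$ taken by the moduli $|\mu_j|$; by assumption \eqref{complex-inequality1} each $r_t$ strictly exceeds every $|\lambda_k|$. For the top level, group together all non-real roots with $|\mu_j|=r_1$ and put everything else (the real part $\sum A_k(m)\lambda_k^m$ together with the oscillating blocks of moduli $r_2,\ldots,r_s$) into a single function $F(m)$. Because each of the discarded terms grows at rate strictly smaller than $r_1^m$ (polynomial $\times$ exponential of smaller base), we have $F(m)/r_1^m\to 0$. The hypotheses of Lemma \ref{lemma-complex} are therefore met, and the conclusion $x_m=F(m)$ kills every oscillating term of modulus $r_1$. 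Repeating the same argument with $r_2$ (the remainder $x_m-\text{(killed terms)}$ is still monotone/anti-monotone, being equal to the original $x_m$), then $r_3$, and so on, eliminates all non-real contributions, leaving
\[
x_m=\sum_{k=1}^p A_k(m)\lambda_k^m \quad\text{for all } m\in\N_0.
\]
A sequence of this form satisfies the linear recurrence whose characteristic polynomial has exactly $\{(\lambda_1,l_1),\ldots,(\lambda_p,l_p)\}$ as its root-multiplicity data, i.e.\ the recurrence associated to \eqref{reduced-equation}. Since $x_0$ was arbitrary, Lemma \ref{lemma-recurrence} (in the opposite direction) yields that $g$ satisfies \eqref{reduced-equation}.

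The main obstacle is the peeling step: one has to be careful to verify that at each level the remainder is still monotone or anti-monotone (this is automatic since it equals $x_m$ itself after subtracting a block already known to vanish), and that the collected ``lower-order'' part $F(m)$ really satisfies $F(m)/r_t^m\to 0$ at every stage, which is where the strict inequality $|\lambda_p|<|\mu_1|$ in \eqref{complex-inequality1} is used decisively. Everything else is bookkeeping on Theorem \ref{theorem-recurrence} and a direct appeal to Lemma \ref{lemma-complex}.
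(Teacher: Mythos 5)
Your proposal is correct and follows essentially the same route as the paper: fix an arbitrary orbit, use Lemma \ref{lemma-recurrence} and Theorem \ref{theorem-recurrence} to get the closed form, then repeatedly apply Lemma \ref{lemma-complex} to eliminate the non-real blocks in order of decreasing modulus (the lower-modulus terms absorbed into $F$ thanks to the strict inequality $|\lambda_p|<|\mu_1|$), and conclude via Theorem \ref{theorem-recurrence} and Lemma \ref{lemma-recurrence} again. Your explicit remarks on the ``if'' direction and on the remainder staying monotone/anti-monotone at each peeling step are details the paper leaves implicit, but the argument is the same.
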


\begin{proof}
Fix $x\in\R$ and define a sequence $(x_m)_{m\in\N_0}$ by putting
\begin{equation}\label{seq-def}
x_0=x\quad\mbox{ and }\quad x_{m}=g(x_{m-1})\quad\mbox{ for all }m\in\N.
\end{equation}
By Lemma \ref{lemma-recurrence} this sequence satisfies equation \eqref{recurrence-relation}. Therefore, by Theorem \ref{theorem-recurrence}, it is of the form
\[ x_m=F(m)+\sum_{n=1}^q |\mu_n|^m(A_n(m)\cos m\phi_n+B_n(m)\sin m\phi_n)\quad\mbox{ for all }m\in\N_0; \]
here $F$ stands for the part of the solution for which the roots $\lambda_1,\ldots,\lambda_p$ are responsible, $\phi_n$ is the principal argument of $\mu_n$ and $A_n,B_n$ are polynomials for $n=1,\ldots,q$. Applying Lemma \ref{lemma-complex} to $(x_m)_{m\in\N_0}$ one can eliminate the roots with modulus $|\mu_q|$. In each another step one can eliminate the non-real roots with the greatest modulus. This procedure yields $x_m=F(m)$ for all $m\in\N_0$. Hence, again by Theorem \ref{theorem-recurrence}, $(x_m)_{m\in\N_0}$ satisfies
\[ b_Mx_{m+M}+\ldots+b_1x_{m+1}+b_0x_m=0 \]
and, consequently, \eqref{reduced-equation} holds.
\end{proof}

\begin{corollary}\label{complex-cor1}
Assume
\[ \Root(a_N,\ldots,a_0)=\{(\lambda_1,l_1)\ldots,(\lambda_p,l_p),(\mu_1,k_1),(\xbar{\mu_1},k_1),\ldots,(\mu_q,k_q),(\xbar{\mu_q},k_q)\}. \]
If
\begin{equation}\label{complex-inequality2}
|\lambda_1|\ge\ldots\ge|\lambda_p|>|\mu_1|\ge\ldots\ge|\mu_q|,
\end{equation}
then a continuous surjection $g\colon I\to I$ satisfies equation \eqref{main-equation} if and only if it satisfies \eqref{reduced-equation} with \[ \Root(b_M,\ldots,b_0)=\{(\lambda_1,l_1)\ldots,(\lambda_p,l_p)\}. \]
\end{corollary}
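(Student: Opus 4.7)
The plan is to reduce Corollary~\ref{complex-cor1} to Theorem~\ref{complex-thm1} via the duality between \eqref{main-equation} and \eqref{dual-equation} described in the paragraph following Lemma~\ref{lemma-recurrence}.

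First I would verify that $g$ is a homeomorphism of $I$ onto itself: continuity and surjectivity are assumed, and injectivity is Lemma~\ref{lemma-injective}. Hence $h:=g^{-1}$ is a continuous self-mapping of $I$, and substituting $h^N(x)$ for $x$ in \eqref{main-equation} yields the dual equation \eqref{dual-equation} for $h$. The characteristic polynomial $a_0r^N+\ldots+a_{N-1}r+a_N$ of the dual equation has root set
\[ \{(\lambda_1^{-1},l_1),\ldots,(\lambda_p^{-1},l_p),(\mu_1^{-1},k_1),(\xbar{\mu_1}^{-1},k_1),\ldots,(\mu_q^{-1},k_q),(\xbar{\mu_q}^{-1},k_q)\}. \]
Taking reciprocals reverses the chain of inequalities in \eqref{complex-inequality2}, so
\[ |\lambda_1^{-1}|\le\ldots\le|\lambda_p^{-1}|<|\mu_1^{-1}|\le\ldots\le|\mu_q^{-1}|, \]
which is precisely the hypothesis \eqref{complex-inequality1} of Theorem~\ref{complex-thm1} applied to the dual equation.

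The next step is to apply Theorem~\ref{complex-thm1} directly to $h$, which eliminates all the non-real roots and shows that $h$ satisfies an iterative equation whose characteristic polynomial has exactly the roots $\{(\lambda_k^{-1},l_k)\}_{k=1}^p$. Invoking the duality once more --- either by substituting $g^M(x)$ for $x$ in this reduced equation for $h$, or equivalently by using the remark that a root is eliminable in the original equation if and only if its reciprocal is eliminable in the dual --- produces \eqref{reduced-equation} for $g$ with $\Root(b_M,\ldots,b_0)=\{(\lambda_k,l_k)\}_{k=1}^p$. The converse direction is immediate from the divisibility observation recalled before \eqref{reduced-equation}: the polynomial with root set $\{(\lambda_k,l_k)\}_{k=1}^p$ divides $a_Nr^N+\ldots+a_0$ (the quotient being the factor carrying the complex conjugate pairs $\mu_j,\xbar{\mu_j}$), so every solution to \eqref{reduced-equation} is automatically a solution to \eqref{main-equation}.

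There is no serious analytic obstacle here; all the hard work was already done in Theorem~\ref{complex-thm1}. The only points requiring care are the bookkeeping around the two applications of duality, and the observation that the surjectivity hypothesis on $g$ is exactly what is needed for $h$ to be an $I$-valued continuous self-map (so that Theorem~\ref{complex-thm1} is genuinely applicable to $h$); everything else then follows formally from the inversion of moduli that carries \eqref{complex-inequality2} onto \eqref{complex-inequality1}.
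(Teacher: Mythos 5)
Your proposal is correct and is essentially the paper's own argument: the authors also prove the corollary by noting that $g$ is injective (Lemma \ref{lemma-injective}) and surjective by hypothesis, passing to the dual equation \eqref{dual-equation} whose characteristic roots are the reciprocals (so that \eqref{complex-inequality2} becomes \eqref{complex-inequality1}), and then invoking Theorem \ref{complex-thm1}. Your write-up merely spells out the bookkeeping (the translation back to $g$ and the divisibility direction) that the paper leaves implicit.
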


\begin{proof}
By Lemma \ref{lemma-injective} case \eqref{complex-inequality2} may be reduced to case \eqref{complex-inequality1} by considering the dual equation. Thus, the assertion follows from Theorem \ref{complex-thm1}.
\end{proof}

\begin{remark}\label{remark-surjective}
It is well-known (see e.g. \cite{zhang-zhang}) that if $I=\R$, then each continuous solution to \eqref{main-equation} maps $\R$ onto $\R$. Hence the assumption of surjectivity in Corollary \ref{complex-cor1} is satisfied automatically in the case when $I=\R$.
\end{remark}

\begin{theorem}\label{complex-thm2}
Assume
\[ \Root(a_N,\ldots,a_0)=\{(1,1),(\lambda_1,l_1)\ldots,(\lambda_p,l_p),(\mu_1,k_1),(\xbar{\mu_1},k_1),\ldots,(\mu_q,k_q),(\xbar{\mu_q},k_q)\}. \]
If
\[ |\lambda_1|\le\ldots\le|\lambda_p|<|\mu_1|\le\ldots\le|\mu_q|, \]
then a continuous function $g\colon I\to I$ satisfies equation \eqref{main-equation} if and only if it satisfies \eqref{reduced-equation} with
\[ \Root(b_M,\ldots,b_0)=\{(1,1),(\lambda_1,l_1)\ldots,(\lambda_p,l_p)\}. \]
\end{theorem}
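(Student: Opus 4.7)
The ``if'' direction is immediate: since the characteristic polynomial of the reduced equation divides $a_Nr^N+\cdots+a_0$, any solution of \eqref{reduced-equation} also solves \eqref{main-equation} (see the discussion following \eqref{reduced-equation}). For the ``only if'' direction my plan is to mimic the proof of Theorem \ref{complex-thm1}, but to apply Lemma \ref{lemma-complex} to the first-difference sequence, which absorbs the extra simple root $1$.

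First, I would fix $x\in I$, form the iterate sequence $x_m=g^m(x)$ which satisfies \eqref{recurrence-relation} by Lemma \ref{lemma-recurrence}, and decompose it via Theorem \ref{theorem-recurrence} as
\[
x_m=C+F_1(m)+\sum_{n=1}^{q}|\mu_n|^m\bigl(A_n(m)\cos m\phi_n+B_n(m)\sin m\phi_n\bigr),
\]
where $C\in\R$ comes from the simple root $1$, $F_1(m)=\sum_{k=1}^p A_k(m)\lambda_k^m$, and $\phi_n$ is the principal argument of $\mu_n$. I then pass to $y_m=x_{m+1}-x_m$. Since $1$ is a simple root of the original characteristic polynomial, $(y_m)$ satisfies the recurrence whose characteristic polynomial is $(a_Nr^N+\cdots+a_0)/(r-1)$; in its Theorem \ref{theorem-recurrence}-representation the constant term has disappeared, while the same $\lambda_k$ and $\mu_j$ still appear (with new polynomial coefficients).

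To kill the $\mu$-contribution of $y_m$, I would exploit the sign structure inherited from the monotonicity of $g$ (Lemma \ref{lemma-injective} plus continuity). If $g$ is increasing then $(x_m)$ is monotone, so $(y_m)$ has constant sign; if $g$ is decreasing then $(x_m)$ is anti-monotone, so the sequence $z_m=(-1)^my_m$ has constant sign. I apply the monotone branch of Lemma \ref{lemma-complex} to $(y_m)$ in the first case and to $(z_m)$ in the second (note that $(z_m)$ satisfies a recurrence whose characteristic roots are $-\lambda_k$ and $-\mu_j$, preserving \eqref{complex-inequality1} and keeping the new arguments $\phi_n+\pi$ in $(0,2\pi)$). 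In both cases the ``$F$'' of Lemma \ref{lemma-complex} is a pure $\lambda$-combination, so $F(m)/|\mu_q|^m\to0$ is automatic and the lemma yields that the $\mu$-part of $y_m$ (equivalently of $z_m$) vanishes.

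Finally, I would lift ``$y_m$ has no $\mu$-part'' to ``$x_m$ has no $\mu$-part''. This follows because the difference operator $E-1$ is injective on the span of the basis sequences associated to the $\mu_n$'s (since $1$ is not among those roots), so $x_m$ cannot hide a non-zero $\mu$-combination that becomes zero after one difference. Thus $x_m=C+F_1(m)$ satisfies the recurrence with characteristic polynomial $(r-1)\prod_k(r-\lambda_k)^{l_k}$, and Lemma \ref{lemma-recurrence} translates this back to \eqref{reduced-equation}. The main obstacle to flag is that Lemma \ref{lemma-complex} is stated for monotone or anti-monotone sequences, whereas $(y_m)$ and $(z_m)$ in this reduction only have constant sign. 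A re-reading of the proof of Lemma \ref{lemma-complex} shows, however, that its monotone branch uses nothing beyond eventual non-negativity after a sign flip; once this is observed, the argument applies verbatim. Tracking the substitution $r\mapsto -r$ cleanly in the anti-monotone case is the remaining technical point.
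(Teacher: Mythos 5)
Your proof is correct, but it takes a genuinely different route from the paper's. The paper handles the extra simple root $1$ in one stroke: in the Theorem \ref{theorem-recurrence} representation of $(x_m)_{m\in\N_0}$ the root $1$ contributes only an additive constant $C$, a constant affects neither monotonicity nor anti-monotonicity, so Lemma \ref{lemma-complex} is applied directly to the iterate sequence (shifted by $C$) and the elimination of the non-real roots proceeds verbatim as in Theorem \ref{complex-thm1} --- no differencing and no lifting are needed. You instead remove the root $1$ by passing to $y_m=x_{m+1}-x_m$, which is only of constant sign (or of constant sign after multiplying by $(-1)^m$), so you need a strengthened form of Lemma \ref{lemma-complex}; your observation that its monotone branch uses nothing beyond eventual constant sign is accurate, and your lifting step --- injectivity of the difference operator on the span of the $\mu$-basis sequences, because $1$ is not among those roots --- is also sound. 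Two small repairs are needed in your write-up: first, in the decreasing case the flipped non-real roots $-\mu_j$ must be represented with arguments in $(0,\pi)$, which Lemma \ref{lemma-complex} requires (take the conjugate representative, of argument $\pi-\phi_j$), rather than $(0,2\pi)$ as you state; second, at the first elimination step the function $F$ of Lemma \ref{lemma-complex} contains not only the $\lambda$-part of $y_m$ but also the $\mu$-terms of smaller modulus --- harmless, since these still satisfy $F(m)/|\mu_q|^m\to0$ and the moduli are then eliminated level by level exactly as in the proof of Theorem \ref{complex-thm1}. What your approach buys is that only constant-sign information about first differences is used, which is in the spirit of Section \ref{opposite-elimination}; what the paper's approach buys is brevity, the entire argument being the remark that a constant does not change (anti-)monotonicity.
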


\begin{proof}
The proof is very similar to the proof of Theorem \ref{complex-thm1}. Here the sequence $(x_m)_{m\in\N_0}$ has the form
\[ x_m=C+F(m)+\sum_{n=1}^q |\mu_n|^m(A_n(m)\cos m\phi_n+B_n(m)\sin m\phi_n)\quad\mbox{ for all }m\in\N_0, \]
where $C$ is a constant. Since a constant has no effect on the monotonicity or the anti-monotonicity of a sequence, we can apply Lemma \ref{lemma-complex} and argue as before.
\end{proof}

\begin{corollary}\label{complex-cor2}
Assume
\[ \Root(a_N,\ldots,a_0)=\{(1,1),(\lambda_1,l_1)\ldots,(\lambda_p,l_p),(\mu_1,k_1),(\xbar{\mu_1},k_1),\ldots,(\mu_q,k_q),(\xbar{\mu_q},k_q)\}. \]
If
\[ |\lambda_1|\ge\ldots\ge|\lambda_p|>|\mu_1|\ge\ldots\ge|\mu_q|, \]
then a continuous surjection $g\colon I\to I$ satisfies equation \eqref{main-equation} if and only if it satisfies \eqref{reduced-equation} with
\[ \Root(b_M,\ldots,b_0)=\{(1,1),(\lambda_1,l_1)\ldots,(\lambda_p,l_p)\}. \]
\end{corollary}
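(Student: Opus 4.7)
The plan is to reduce the hypothesis to that of Theorem \ref{complex-thm2} by passing to the dual equation, in complete analogy with the proof of Corollary \ref{complex-cor1}. First, since $g$ is continuous, injective by Lemma \ref{lemma-injective}, and surjective, it is a homeomorphism of $I$, so $h=g^{-1}\colon I\to I$ is a continuous self-mapping. Substituting $g^{-N}(x)$ for $x$ in \eqref{main-equation}, as already observed in the preliminaries, shows that $g$ satisfies \eqref{main-equation} if and only if $h$ satisfies the dual equation \eqref{dual-equation}.

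Next, I would check that the characteristic equation of \eqref{dual-equation} fits the framework of Theorem \ref{complex-thm2}. As recalled before Lemma \ref{lemma-recurrence}, its roots are the reciprocals of the roots of \eqref{characteristic-equation} with the same multiplicities. Since $1^{-1}=1$ and $(\xbar{\mu_j})^{-1}=\overline{\mu_j^{-1}}$, the root system of the dual equation is
\[
\{(1,1),(\lambda_1^{-1},l_1),\ldots,(\lambda_p^{-1},l_p),(\mu_1^{-1},k_1),(\overline{\mu_1^{-1}},k_1),\ldots,(\mu_q^{-1},k_q),(\overline{\mu_q^{-1}},k_q)\}.
\]
Reciprocation reverses the ordering of the moduli and preserves the strict inequality $|\lambda_p|>|\mu_1|$, so the assumption becomes
\[
|\lambda_p^{-1}|\le\ldots\le|\lambda_1^{-1}|<|\mu_1^{-1}|\le\ldots\le|\mu_q^{-1}|,
\]
which is, after a harmless renumbering of the $\lambda_i^{-1}$, exactly the hypothesis of Theorem \ref{complex-thm2}.

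Applying that theorem to $h$ on $I$ yields that $h$ satisfies \eqref{dual-equation} if and only if it satisfies an iterative equation whose characteristic roots are $\{(1,1),(\lambda_1^{-1},l_1),\ldots,(\lambda_p^{-1},l_p)\}$. That reduced equation for $h$ is in turn the dual of an equation for $g$ whose characteristic roots are $\{(1,1),(\lambda_1,l_1),\ldots,(\lambda_p,l_p)\}$, namely \eqref{reduced-equation} with the prescribed root set. Since $g$ is bijective, the $g\leftrightarrow h$ duality is reversible at this lower order as well, so chasing it back delivers the claimed equivalence.

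There is no real obstacle beyond the bookkeeping described above. The only points to watch are that $1$ and conjugate pairs are self-dual under reciprocation, that the one strict inequality in the hypothesis survives inversion, and that the surjectivity assumption is used precisely to guarantee that $h$ exists as a continuous self-map of $I$—the same role it played in Corollary \ref{complex-cor1}.
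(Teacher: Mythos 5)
Your proof is correct and follows exactly the route the paper intends: the corollary is stated without proof precisely because, as in Corollary \ref{complex-cor1}, one passes to the dual equation (using injectivity from Lemma \ref{lemma-injective} together with surjectivity to get $h=g^{-1}$ as a continuous self-map of $I$), observes that reciprocation fixes the root $1$, preserves conjugate pairs and multiplicities, and reverses the modulus ordering, and then applies Theorem \ref{complex-thm2}. Your bookkeeping, including the reversibility of the duality at the reduced order, matches the paper's argument.
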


As an immediate corollary from the theorems above we obtain a result which for $I=\R$ was also proven in \cite{yang-zhang}.

\begin{corollary}
If all the roots of characteristic equation \eqref{characteristic-equation} are non-real, then equation \eqref{main-equation} has no continuous solution $g\colon I\to I$.
\end{corollary}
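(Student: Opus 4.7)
The plan is to argue by contradiction, mimicking the stripping procedure used in Theorem \ref{complex-thm1} in the limiting case where no real root is present. Assume then that a continuous $g\colon I\to I$ satisfies \eqref{main-equation}. By Lemma \ref{lemma-injective} together with continuity, $g$ is strictly monotone, so for every $x_0\in I$ the orbit $x_m=g(x_{m-1})$ is either monotone (if $g$ is increasing) or anti-monotone (if $g$ is decreasing). By Lemma \ref{lemma-recurrence} this sequence satisfies \eqref{recurrence-relation}, and since by hypothesis \eqref{characteristic-equation} has only non-real roots, Theorem \ref{theorem-recurrence} gives
$$x_m=\sum_{j=1}^{q}|\mu_j|^m\bigl(B_j(m)\cos m\phi_j+C_j(m)\sin m\phi_j\bigr),$$
with no real-root contribution at all.

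Next I would order the moduli as $|\mu_1|\le\ldots\le|\mu_q|$ and apply Lemma \ref{lemma-complex} iteratively, exactly as in the proof of Theorem \ref{complex-thm1}. At the first step one groups the terms of modulus $|\mu_q|$ as the oscillatory part and absorbs the lower-modulus terms into a function $F(m)$ which satisfies $F(m)/|\mu_q|^m\to0$; monotonicity or anti-monotonicity of $(x_m)$ then forces those outer oscillatory terms to vanish. Repeating the argument down through $|\mu_{q-1}|,\ldots,|\mu_1|$ yields $x_m=0$ for all $m\in\N_0$, and in particular $x_0=0$. But $x_0\in I$ was arbitrary, so $I=\{0\}$, contradicting the standing assumption that $I$ is non-degenerated.

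The only mildly delicate point is the iteration itself: one has to verify that after each round of eliminations, the residual sum still fits the hypotheses of Lemma \ref{lemma-complex}, with an updated $F$ that is a sum of oscillatory terms of strictly smaller modulus. This is precisely the bookkeeping already carried out in the proof of Theorem \ref{complex-thm1}, so no new ingredient is required; the present corollary is simply what that proof delivers when the list of real roots is empty and consequently $F\equiv0$ at the end of the stripping.
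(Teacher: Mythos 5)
Your argument is correct and is essentially the paper's own route: the corollary is stated there as an immediate consequence of Theorem \ref{complex-thm1} (and \ref{complex-thm2}), whose proof, run with an empty list of real roots exactly as you describe, gives $x_m=F(m)\equiv0$, hence $x_0=0$ for every $x_0\in I$, contradicting that $I$ is non-degenerated.
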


\section{Eliminating roots of opposite sign}\label{opposite-elimination}
Below we are going to prove that the order of polynomial-like equation \eqref{main-equation} also can be lowered when the minimal and the maximal (with respect to the absolute value) root of its characteristic equation are real and of opposite sign. However, it is necessary to distinguish two cases---when the unknown function monotonically increases and when it monotonically decreases.

\begin{theorem}\label{theorem-opposite1a}
Assume
\[ \Root(a_N,\ldots,a_0)=\{(r_1,k_1),(r_2,k_2),(\lambda_1,l_1),\ldots,(\lambda_p,l_p)\} \]
If
\[ |r_1|<|\lambda_1|\le\ldots\le|\lambda_p|<|r_2| \]
and $r_1$, $r_2$ are real with $r_1r_2<0$, then a continuous increasing surjection $g\colon I\to I$ satisfies equation \eqref{main-equation} if and only if it satisfies \eqref{reduced-equation} with
\[ \Root(b_M,\ldots,b_0)=\{(r_j,k_j),(\lambda_1,l_1),\ldots,(\lambda_p,l_p)\}, \]
where $j$ is the index of the positive number from $r_1$ and $r_2$.
\end{theorem}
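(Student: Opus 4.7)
The plan is to follow the template of Theorem \ref{complex-thm1}, replacing the complex-root oscillation handled by Lemma \ref{lemma-complex} with the $(-1)^m$ oscillation produced by a negative real root. Fix $x_0\in I$ and form the orbit $(x_m)_{m\in\N_0}$ by $x_m=g(x_{m-1})$. By Lemma \ref{lemma-recurrence} this sequence satisfies \eqref{recurrence-relation}, so Theorem \ref{theorem-recurrence} lets me write
\[
x_m = P_1(m)\,r_1^m + P_2(m)\,r_2^m + F(m),
\]
where $\deg P_j\le k_j-1$ and $F(m)$ collects the contributions of $\lambda_1,\ldots,\lambda_p$. Once I show that the polynomial corresponding to the negative root is identically zero, $(x_m)$ will solve the reduced recurrence and Lemma \ref{lemma-recurrence} read backwards will give \eqref{reduced-equation} for $g$; the reverse implication is the divisibility fact recalled in the introduction. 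The argument splits into two cases according to which of $r_1,r_2$ is negative.

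\emph{Case 1: $r_2<0$, so $r_1>0$ is the root to be kept.} Then $|r_2|$ is strictly the largest modulus among all roots, hence $[P_1(m)\,r_1^m + F(m)]/|r_2|^m \to 0$. Since $g$ is strictly increasing (by Lemma \ref{lemma-injective} and the monotonicity hypothesis), $(x_m)$ is monotone; after possibly replacing it by $(-x_m)$ we may assume $x_m\ge x_{m-1}$ for all large $m$. Let $c$ and $d$ denote the leading coefficient and the degree of $P_2$, and use $r_2^m=(-1)^m|r_2|^m$; I would divide the inequality $x_m-x_{m-1}\ge 0$ by $|r_2|^{m-1}m^d$. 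Both $P_2(m)/m^d$ and $P_2(m-1)/m^d$ tend to $c$, while every other summand becomes $o(1)$ (the $r_1$-part decays at rate $|r_1/r_2|^{m-1}$ and the $\lambda$-part exponentially, each beating any polynomial factor), so the inequality reduces to
\[
(-1)^{m-1} c (r_2-1) + o(1) \ge 0 \qquad (m\to\infty).
\]
Letting $m\to\infty$ through the even and the odd integers in turn yields $c(r_2-1)\le 0$ and $c(r_2-1)\ge 0$, hence $c=0$ because $r_2\neq 1$. Iterating this leading-coefficient extraction strips $P_2$ term by term until $P_2\equiv 0$, so $(x_m)$ satisfies the reduced recurrence.

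\emph{Case 2: $r_1<0$, so $r_2>0$ is the root to be kept.} By Lemma \ref{lemma-injective} together with surjectivity, $g$ is a bijection, and $h=g^{-1}$ is again a continuous increasing surjection of $I$. The characteristic roots of the dual equation \eqref{dual-equation} for $h$ are $1/r_1, 1/r_2, 1/\lambda_1,\ldots,1/\lambda_p$, so the modulus order reverses and $1/r_1<0$ becomes the unique root of largest modulus. Case 1 applied to $h$ eliminates $1/r_1$ from the dual equation, which, by the correspondence between the roots of a polynomial-like equation and those of its dual recorded after Lemma \ref{lemma-injective}, is equivalent to eliminating $r_1$ from \eqref{main-equation} for $g$.

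The main obstacle I expect is the asymptotic bookkeeping in Case 1: one must verify that every term in $x_m-x_{m-1}$ other than the one carrying the leading coefficient of $P_2$ is genuinely $o(1)$ after division by $|r_2|^{m-1}m^d$, uniformly enough to survive taking limits along either parity. This plays here the role that Lemma \ref{lemma-tabor} plays inside the proof of Lemma \ref{lemma-complex}; the rest of the argument is a direct transcription of the Theorem \ref{complex-thm1} strategy, with a single negative real root taking over from the pairs of conjugate complex roots.
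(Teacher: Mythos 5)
Your proposal is correct and follows essentially the same route as the paper: pass to the orbit sequence via Lemma \ref{lemma-recurrence}, apply Theorem \ref{theorem-recurrence}, use the dual equation (where surjectivity and Lemma \ref{lemma-injective} give bijectivity) to place the negative root as the one of largest modulus, and strip the corresponding polynomial coefficient by coefficient using the asymptotic dominance of that root together with monotonicity of the orbit. The only cosmetic difference is that you extract the leading coefficient from the differences $x_m-x_{m-1}$ with parity limits, whereas the paper argues directly on the sign alternation of $x_m$ itself; both are the same asymptotic argument.
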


\begin{proof}
Fix $x\in\R$ and define a sequence $(x_m)_{m\in\N_0}$ by \eqref{seq-def}. By Lemma \ref{lemma-recurrence} this sequence satisfies equation \eqref{recurrence-relation}. By replacing equation \eqref{main-equation} with the dual equation if necessary, we may assume that $r_2<0$. Since $g$ is increasing, $(x_m)_{m\in\N_0}$ is monotone and, by Theorem \ref{theorem-recurrence}, of the form
\[ x_m=F(m)+A(m)r_2^m\quad\mbox{ for all }m\in\N_0, \]
where $F$ represents the part of the solution to \eqref{recurrence-relation} for which the roots $r_1,\lambda_1,\ldots,\lambda_p$ are responsible and $A$ is a polynomial of degree at most $k_2-1$. Since
\[ x_m=m^{k_2-1}\cdot|r_2|^m\left(\frac{F(m)}{m^{k_2-1}\cdot|r_2|^m}+\frac{A(m)}{m^{k_2-1}}(-1)^m\right)\quad\mbox{ for all }m\in\N_0 \]
and
\[ \lim_{m\to\infty}\frac{F(m)}{m^{k_2-1}\cdot|r_2|^m}=0,\quad\lim_{m\to\infty}\frac{A(m)}{m^{k_2-1}}=a_{k_2-1}, \]
where $a_{k_2-1}$ stands for the coefficient at $m^{k_2-1}$ in the polynomial $A$, the monotonicity of $(x_m)_{m\in\N_0}$ forces $a_{k_2-1}=0$. Continuing in this fashion, we eliminate all non-zero terms from $A$ and obtain $A\equiv0$. It means that $x_m=F(m)$ for all $m\in\N_0$ and, again by Theorem \ref{theorem-recurrence}, it safisfies the relation $b_M x_{m+M}+\ldots+b_1x_{m+1}+b_0x_m=0$, which ends the proof.
\end{proof}

\begin{theorem}\label{theorem-opposite1b}
Assume
\[ \Root(a_N,\ldots,a_0)=\{(r_1,k_1),(r_2,k_2),(\lambda_1,l_1),\ldots,(\lambda_p,l_p)\}. \]
If
\[ |r_1|<|\lambda_1|\le\ldots\le|\lambda_p|<|r_2| \]
and $r_1$, $r_2$ are real with $r_1r_2<0$, then a continuous decreasing surjection $g\colon I\to I$ satisfies equation \eqref{main-equation} if and only if
\begin{enumerate}
\item[\rm(i)] it satisfies \eqref{reduced-equation} with
\[ \Root(b_M,\ldots,b_0)=\{(r_j,k_j),(\lambda_1,l_1),\ldots,(\lambda_p,l_p)\}, \]
where $j$ is the index of the negative number from $r_1$ and $r_2$, in the case where $r_1\neq1\neq r_2$
\end{enumerate}
or
\begin{enumerate}
\item[\rm(ii)] it satisfies \eqref{reduced-equation} with
\[ \Root(b_M,\ldots,b_0)=\{(1,1),(r_j,k_j),(\lambda_1,l_1),\ldots,(\lambda_p,l_p)\}, \]
where $j$ is the index of the negative number from $r_1$ and $r_2$, in the case where the positive root from $r_1$ and $r_2$ equals~$1$.
\end{enumerate}
\end{theorem}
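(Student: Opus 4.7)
The plan is to parallel the proof of Theorem \ref{theorem-opposite1a}, replacing monotonicity of the iterative sequence by anti-monotonicity. Fixing $x_0\in I$ and defining $(x_m)_{m\in\N_0}$ by \eqref{seq-def}, Lemma \ref{lemma-recurrence} ensures that it satisfies \eqref{recurrence-relation}. By passing to the dual equation if necessary, whose continuous solution is $g^{-1}$---still a decreasing surjection of $I$---and using that duality inverts each root (thus swapping the modulus order of $r_1$ and $r_2$ while preserving $r_1r_2<0$), I may assume that the positive one of $r_1,r_2$ is $r_2$. Theorem \ref{theorem-recurrence} then writes
\[ x_m=F(m)+A(m)r_2^m\quad\mbox{ for all }m\in\N_0, \]
where $F$ encodes the contributions of $r_1,\lambda_1,\ldots,\lambda_p$ (all of modulus strictly less than $r_2$) and $A$ is a polynomial of degree at most $k_2-1$.

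Since $g$ is decreasing, $(x_m)_{m\in\N_0}$ is anti-monotone, so $(-1)^m(x_m-x_{m-1})$ has constant sign. The key identity I would exploit is
\[ x_m-x_{m-1}=\bigl(F(m)-F(m-1)\bigr)+r_2^{m-1}\bigl((r_2-1)A(m)+A(m)-A(m-1)\bigr), \]
in which $F(m)-F(m-1)$ is negligible against $r_2^{m-1}m^{k_2-1}$, because every root responsible for $F$ has modulus strictly less than $r_2$.

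In case (i), with $r_2\neq1$, the bracket is asymptotic to $(r_2-1)a_{k_2-1}m^{k_2-1}$, where $a_{k_2-1}$ denotes the leading coefficient of $A$; hence $(-1)^m(x_m-x_{m-1})$ inherits the oscillating sign of $(-1)^m(r_2-1)a_{k_2-1}$ with unbounded magnitude, contradicting its constant sign unless $a_{k_2-1}=0$. Stripping leading coefficients of $A$ successively, exactly as in the proof of Theorem \ref{theorem-opposite1a}, forces $A\equiv0$; then $x_m=F(m)$ and Theorem \ref{theorem-recurrence} delivers \eqref{reduced-equation} with the desired root set.

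In case (ii), where $r_2=1$, the term $(r_2-1)A(m)$ vanishes and the bracket reduces to $A(m)-A(m-1)$, whose leading term is $(k_2-1)a_{k_2-1}m^{k_2-2}$. The same alternation argument eliminates $a_{k_2-1}$ whenever $k_2\ge2$, and iterating drives $A$ down to its constant term $a_0$, which is no longer constrained because both $(r_2-1)A(m)$ and $A(m)-A(m-1)$ then vanish identically. Thus $x_m=F(m)+a_0$, which Theorem \ref{theorem-recurrence} identifies with a solution of the recurrence whose characteristic root set is $\{(1,1),(r_j,k_j),(\lambda_1,l_1),\ldots,(\lambda_p,l_p)\}$. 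I expect the principal obstacle to be exactly this threshold at $r_2=1$: one must verify that the elimination terminates at degree $0$, neither sooner nor later, so that the surviving constant corresponds to adding precisely one factor $(r-1)$ to the reduced characteristic polynomial.
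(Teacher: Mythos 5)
Your proposal is correct and follows essentially the same route as the paper: pass to the dual equation to normalize the positive root to be the dominant one, write $x_m=F(m)+A(m)r_2^m$ via Lemma \ref{lemma-recurrence} and Theorem \ref{theorem-recurrence}, and use anti-monotonicity of the iterate sequence to force $A\equiv0$ when $r_2\neq1$ and to force $A$ constant when $r_2=1$, the surviving constant accounting for the simple root $1$. One small aside: the phrase ``with unbounded magnitude'' is not needed (and fails when $r_2<1$), but this is harmless since the eventual constant nonzero sign of $x_m-x_{m-1}$ already contradicts anti-monotonicity.
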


\begin{proof}
Fix $x\in\R$ and define a sequence $(x_m)_{m\in\N_0}$ by \eqref{seq-def}. By Lemma \ref{lemma-recurrence} this sequence satisfies equation \eqref{recurrence-relation}.

Firstly, we consider the case where $r_1\neq1\neq r_2$. By replacing equation \eqref{main-equation} with the dual equation if necessary, we may assume that $r_2>0$. Since $g$ is decreasing, $(x_m)_{m\in\N_0}$ is anti-monotone and of the form
\[ x_m=F(m)+A(m)r_2^m\quad\mbox{ for all }m\in\N_0, \]
where $F$ represents the part of the solution to \eqref{recurrence-relation} for which the roots $r_1,\lambda_1,\ldots,\lambda_p$ are responsible and $A$ is a polynomial of degree at most $k_2-1$. If it were $A\not\equiv0$, we would have
\[ x_{m+1}-x_m=A(m)\cdot r_2^m\left(\frac{F(m+1)-F(m)}{A(m)\cdot r_2^m}+\frac{A(m+1)}{A(m)}\,r_2-1\right)\quad\mbox{ for all }m\in\N_0 \]
and
\[ \lim_{m\to\infty}\left(\frac{F(m+1)-F(m)}{A(m)\cdot r_2^m}+\frac{A(m+1)}{A(m)}\,r_2-1\right)=r_2-1\neq0, \]
which is a contradiction to the anti-monotonicity of $(x_m)_{m\in\N_0}$. Consequently, $x_m=F(m)$ for all $m\in\N_0$ and, similarly as in the previous proof, \eqref{reduced-equation} is satisfied with $b_j$'s as in assertion (i).

Secondly, we consider the case where either $r_1=1$ or $r_2=1$. By replacing equation \eqref{main-equation} with the dual equation if necessary, we may assume that $r_2=1$. Again $(x_m)_{m\in\N_0}$ is anti-monotone and of the form
\[ x_m=F(m)+A(m)\quad\mbox{ for all }m\in\N_0, \]
where $F$ represents the part of the solution to \eqref{recurrence-relation} for which the roots $r_1,\lambda_1,\ldots,\lambda_p$ are responsible and $A$ is a polynomial of degree at most $k_2-1$. If the polynomial $A(m+1)-A(m)$ were not constant, then denoting by $k$ its degree, we would have
\[ x_{m+1}-x_m=m^k\left(\frac{F(m+1)-F(m)}{m^k}+\frac{A(m+1)-A(m)}{m^k}\right) \]
and
\[ \lim_{m\to\infty}\left(\frac{F(m+1)-F(m)}{m^k}+\frac{A(m+1)-A(m)}{m^k}\right)=a_k\neq0, \]
where $a_k$ stands for the coefficient at $m^k$ in $A(m+1)-A(m)$, which is a contradiction to the anti-monotonicity of $(x_m)_{m\in\N_0}$. Consequently, $A$ is constant and \eqref{reduced-equation} holds with $b_j$'s as in assertion (ii).
\end{proof}

Reasoning in the same manner as in the proof of Theorem \ref{complex-thm2} and making use of the proof above, one can show that a single root $1$ does not prevent the elimination of other roots. More precisely, we have the following result.

\begin{theorem}\label{theorem-oppositeii}
Assume
\[ \Root(a_N,\ldots,a_0)=\{(1,1),(r_1,k_1),(r_2,k_2),(\lambda_1,l_1),\ldots,(\lambda_p,l_p)\}. \]
If
\[ |r_1|<|\lambda_1|\le\ldots\le|\lambda_p|<|r_2| \]
and $r_1$, $r_2$ are real with $r_1r_2<0$, then a continuous decreasing surjection $g\colon I\to I$ satisfies equation \eqref{main-equation} if and only if it satisfies \eqref{reduced-equation} with
\[ \Root(b_M,\ldots,b_0)=\{(1,1),(r_j,k_j),(\lambda_1,l_1),\ldots,(\lambda_p,l_p)\}, \]
where $j$ is either the index of the negative number from $r_1$ and $r_2$, in the case of decreasing $g$, or the index of the positive number from $r_1$ and $r_2$, in the case of increasing $g$.
\end{theorem}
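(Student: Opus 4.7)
The plan is to mimic the proofs of Theorems \ref{theorem-opposite1a} and \ref{theorem-opposite1b}(i), exploiting the fact that the extra constant contribution produced by the simple root $1$ is invisible to the monotonicity/anti-monotonicity tests used there. First, I would fix $x\in I$ and form $(x_m)_{m\in\N_0}$ by $x_0=x$, $x_m=g(x_{m-1})$; Lemma \ref{lemma-recurrence} then shows it satisfies \eqref{recurrence-relation}, and Theorem \ref{theorem-recurrence} gives it the shape
\[ x_m = C + F(m) + A_1(m)r_1^m + A_2(m)r_2^m, \]
where $C$ is the constant generated by the simple root $1$, $F$ collects the contributions of $\lambda_1,\ldots,\lambda_p$, and $A_1, A_2$ are polynomials of degree at most $k_1-1$ and $k_2-1$, respectively. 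Since $g$ is a continuous surjection and (by Lemma \ref{lemma-injective}) injective, it is a homeomorphism of $I$, so the dual equation \eqref{dual-equation} is available; inversion sends $1$ to itself, so the root $1$ persists in the dual characteristic polynomial.

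To concentrate on the largest modulus, I would pass to the dual equation whenever necessary so as to assume that $r_2>0$ in the decreasing case and $r_2<0$ in the increasing case; this is legitimate because inversion preserves signs, and combined with $|r_1|<|r_2|$ it lets us swap which of the two outermost roots plays the role of $r_2$. The term $A_1(m)r_1^m$ can then be absorbed into $F(m)$, since $|r_1|<|\lambda_p|<|r_2|$ still gives $F(m)/r_2^m\to 0$, so that
\[ x_m = C + F(m) + A(m)r_2^m. \]
The key observation is that $C$ drops out of every difference $x_{m+1}-x_m$, hence it has no effect on monotonicity or anti-monotonicity of $(x_m)_{m\in\N_0}$. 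In the decreasing case one therefore reproduces the calculation of Theorem \ref{theorem-opposite1b}(i): the leading behaviour of $x_{m+1}-x_m$ is governed by $A(m)r_2^m(r_2-1)$, whose sign is eventually constant (because $r_2>0$ and $r_2\neq 1$, the latter being enforced by the convention that the entries of $\Root(a_N,\ldots,a_0)$ are pairwise distinct), contradicting anti-monotonicity unless $A\equiv 0$. In the increasing case, the alternation introduced by $r_2<0$ in the dominant term is incompatible with the monotonicity of $(x_m)_{m\in\N_0}$, again forcing $A\equiv 0$ exactly as in Theorem \ref{theorem-opposite1a}.

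Once $A\equiv 0$ is secured, $x_m=C+F(m)$ for every $m$, which by Theorem \ref{theorem-recurrence} is the general solution of the recurrence whose characteristic polynomial has root set $\{(1,1),(r_j,k_j),(\lambda_1,l_1),\ldots,(\lambda_p,l_p)\}$; Lemma \ref{lemma-recurrence} run in reverse then yields \eqref{reduced-equation}. The most delicate piece of bookkeeping is the dual-equation reduction: one must check that, after reindexing in the dual, the root which survives the elimination procedure is precisely the $r_j$ named in the statement, namely the negative one in the decreasing case and the positive one in the increasing case. Apart from that, everything is a transparent repetition of the previous two proofs, with the constant $C$ riding along passively.
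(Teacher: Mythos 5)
Your proposal is correct and follows the same route the paper intends: the paper itself only sketches this proof, saying to repeat the arguments of Theorems \ref{theorem-opposite1a} and \ref{theorem-opposite1b} while observing (as in the proof of Theorem \ref{complex-thm2}) that the constant contributed by the simple root $1$ does not affect monotonicity or anti-monotonicity of $(x_m)_{m\in\N_0}$, which is exactly your key observation, including the dual-equation bookkeeping that identifies the surviving root $r_j$.
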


\begin{remark}\label{remark-opposite}
The assumption of surjectivity in Theorems \ref{theorem-opposite1a}, \ref{theorem-opposite1b} and \ref{theorem-oppositeii} is not essential when considering the dual equation is not necessary.
\end{remark}

We finish this part of the present paper by observing that in each of the mentioned cases the order of equation \eqref{main-equation} can be esentially lowered. The key property allowing us to use one of the results above is the monotonicity that is implied by the continuity of solutions to considered equation.

\section{A generalized Zolt\'an Boros' problem}\label{boros-section}
In \cite{draga-morawiec} the authors solved the original problem posed by Zolt\'an Boros (see \cite{boros}) during the 50th International Symposium on Functional Equations (2012). Namely, we determined all continuous solutions to equation \eqref{zoltan-equation} for $n=3$. Let us recall this result.

\begin{theorem}\label{dm}
Assume $J\subset(0,\infty)$ to be an interval and let $f\colon J\to J$ be a continuous solution to the equation $f^3(x)=[f(x)]^3/x^2$.
\begin{enumerate}[\rm(i)]
\item If $J$ is bounded and $0\notin\mathrm{cl}\,J$, then $f(x)=x$ for every $x\in J$.
\item If $J$ is bounded and $0\in\mathrm{cl}\,J$, then there exists $c\in(0,1]$ such that
\begin{equation}\label{draga-morawiecii}
f(x)=cx\quad\mbox{ for every }x\in J.
\end{equation}
\item If $J$ is unbounded and $0\notin\mathrm{cl}\,J$, then there exists $c\in[1,\infty)$ such that \eqref{draga-morawiecii} holds.
\item If $J=(0,\infty)$, then there exists $c\in(0,\infty)$ such that either \eqref{draga-morawiecii} holds or
\[ f(x)=\frac{c}{x^2}\quad\mbox{ for every }x\in(0,\infty). \]
\end{enumerate}
\end{theorem}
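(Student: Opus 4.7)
The plan is to reduce \eqref{zoltan-equation} with $n = 3$ to a second-order polynomial-like iterative equation, and then to invoke Nabeya's classification \cite{nabeya}. Substituting $y = \log x$ and setting $g(y) = \log f(e^y)$ on $I = \log J \subseteq \R$, the equation $f^3(x) = [f(x)]^3/x^2$ becomes
\[ g^3(y) - 3g(y) + 2y = 0, \qquad y \in I. \]
Its characteristic polynomial factorises as $r^3 - 3r + 2 = (r-1)^2(r+2)$, giving $\Root(1, 0, -3, 2) = \{(1, 2), (-2, 1)\}$. Lemma~\ref{lemma-injective}, together with continuity of $g$, forces $g$ to be strictly monotone; I would split accordingly into the increasing and decreasing cases.

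If $g$ is increasing, Theorem~\ref{theorem-opposite1a} applies with $r_1 = 1$ (of multiplicity $2$), $r_2 = -2$, and no middle roots. Since $r_2 < 0$, the dualization step in the proof of that theorem is not needed and, by Remark~\ref{remark-opposite}, surjectivity of $g$ need not be assumed. The reduced equation $g^2(y) - 2g(y) + y = 0$ has a double characteristic root at $1$, and by Nabeya's theorem its continuous solutions are the translations $g(y) = y + c$; equivalently $f(x) = \gamma x$ with $\gamma = e^c > 0$. The constraint $f(J) \subseteq J$ then narrows $\gamma$ to the ranges $\{1\}$, $(0,1]$, $[1,\infty)$, $(0,\infty)$ in cases (i)--(iv) respectively, giving precisely the $f(x) = cx$ family appearing in the statement.

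If $g$ is decreasing and $I = \R$ (that is, $J = (0,\infty)$), then $g$ is automatically surjective by Remark~\ref{remark-surjective}, so Theorem~\ref{theorem-opposite1b} applies in its case (ii) (since the positive root equals $1$). The reduced equation $g^2(y) + g(y) - 2y = 0$ then admits, by Nabeya's theorem, exactly $g(y) = -2y + c$ as its continuous decreasing solutions, yielding the second family $f(x) = \gamma/x^2$ of case~(iv). To rule out the decreasing case when $I \neq \R$, I would bypass the reduction theorems and argue directly from Theorem~\ref{theorem-recurrence}: writing the orbit as $y_m = g^m(y_0) = a + bm + c(-2)^m$, the boundedness of $I$ on at least one side forces $c = 0$ (otherwise $c(-2)^m$ eventually escapes $I$), and then anti-monotonicity of the orbit forces $b = 0$, so $y_m$ is constant and $g = \mathrm{id}$, contradicting $g$ being decreasing.

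The main delicate step I anticipate is the invocation of Nabeya's classification for the two second-order equations that arise: $g^2(y) - 2g(y) + y = 0$ must have only translations as its continuous solutions, while $g^2(y) + g(y) - 2y = 0$ must have only $g(y) = -2y + c$ as its continuous decreasing solutions. Each can be verified by introducing the $g$-invariant $\phi(y) := g(y) - y$, respectively $\phi(y) := g(y) + 2y$, and showing that it is constant on $I$ by combining continuity of $g$ with its monotonicity; however, the cleanest route in writing is simply to cite~\cite{nabeya}.
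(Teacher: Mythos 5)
Your reduction is sound and, in fact, it reconstructs with the machinery of this paper a statement that the paper itself does not prove: Theorem \ref{dm} is only recalled here from \cite{draga-morawiec}, and the paper's Section \ref{boros-section} uses it as a black box (note that the paper's own general proof deliberately routes the interval cases (i)--(iii) back through Theorem \ref{dm} rather than through Nabeya). Your substitution, the factorization $r^3-3r+2=(r-1)^2(r+2)$, the use of Theorem \ref{theorem-opposite1a} with Remark \ref{remark-opposite} in the increasing case, of Theorem \ref{theorem-opposite1b}(ii) with Remark \ref{remark-surjective} when $g$ is decreasing and $I=\R$, and your direct orbit argument $y_m=a+bm+c(-2)^m$ excluding decreasing solutions when $I\neq\R$ (an elementary replacement for Lemma \ref{lemma-decreasing}) are all correct, as is the narrowing of the constant $c$ by the condition $f(J)\subseteq J$.

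The genuine soft spot is the final classification step in cases (i)--(iii), where $I=\log J$ is a proper subinterval of $\R$. Nabeya's theorem classifies continuous solutions of the second-order equation for self-maps of the whole line, so citing \cite{nabeya} does not literally dispose of $g\colon I\to I$ with $I\neq\R$; this is precisely why the present paper falls back on Theorem \ref{dm} (and why \cite{morawiec} exists for $n=2$ on intervals). Your proposed substitute, ``the invariant $\phi(y)=g(y)-y$ satisfies $\phi\circ g=\phi$, hence is constant by continuity and monotonicity,'' is not an argument as stated: invariance under $g$ together with continuity and monotonicity of $g$ does not force constancy (for $g(y)=y+1$ every continuous $1$-periodic function is $g$-invariant). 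What does work, and what you should spell out, is the orbit argument: the equation $g^2(y)-2g(y)+y=0$ makes every orbit an arithmetic progression $g^m(y)=y+m\bigl(g(y)-y\bigr)$; if $I$ is bounded this forces $g(y)=y$ outright, and if $I$ is a half-line it forces $h:=g-\mathrm{id}$ to have constant sign, to be nondecreasing (compare the orbits of $y<z$ under the increasing iterates $g^m$), and then the identity $h(g(y))=h(y)$ with $g(y)\ge y$ forces $h$ to be constant on each interval $[y,g(y)]$, hence locally constant on $\{h>0\}$ and, by continuity, constant on $I$. With that inserted, your proof is complete; the decreasing branch of case (iv), where you cite Nabeya on $\R$ for $g^2(y)+g(y)-2y=0$, is exactly how the paper itself argues.
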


The next lemma gives connection between generalized Zolt\'an Boros' problem \eqref{zoltan-equation} and iterative equations of the form \eqref{main-equation}. Its proof is immediate and we omit it.

\begin{lemma}\label{lemma-substitution}
If $J\subset(0,\infty)$ is an interval and $f\colon J\to J$ is a solution to \eqref{zoltan-equation}, then the formula $g=\log\circ f\circ\exp$ defines a function acting from $\log J$ into itself such that the equation
\begin{equation}\label{rownanien}
g^n(x)=ng(x)-(n-1)x
\end{equation}
holds for every $x\in\log J$.

Conversely, if $I\subset \R$ is an interval and $g\colon I\to I$ is a solution to \eqref{rownanien}, then the formula $f=\exp\circ g\circ\log$ defines a function acting from $\exp I$ into itself such that \eqref{zoltan-equation} holds for every $x\in\exp I$.
\end{lemma}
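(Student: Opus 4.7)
The plan is to verify the equivalence by a straightforward substitution, exploiting that $\log$ and $\exp$ are mutually inverse, order-preserving bijections between $(0,\infty)$ and $\R$.

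First I would prove by induction on $n$ that whenever $g=\log\circ f\circ\exp$ one has $f^n=\exp\circ g^n\circ\log$. The base case $n=1$ is the hypothesis rewritten as $f=\exp\circ g\circ\log$. For the inductive step, since $\log\circ\exp$ is the identity,
\[ f^{n+1}(x)=f(f^n(x))=\exp\bigl(g(\log(\exp(g^n(\log x))))\bigr)=\exp(g^{n+1}(\log x)). \]

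With this identity in hand, equation \eqref{zoltan-equation} transforms directly. Writing $y=\log x\in\log J$, the two sides of \eqref{zoltan-equation} become
\[ f^n(x)=\exp(g^n(y))\quad\text{and}\quad\frac{[f(x)]^n}{x^{n-1}}=\frac{\exp(ng(y))}{\exp((n-1)y)}=\exp\bigl(ng(y)-(n-1)y\bigr). \]
Because $\exp$ is injective, \eqref{zoltan-equation} is equivalent to $g^n(y)=ng(y)-(n-1)y$, which is precisely \eqref{rownanien}. Moreover, $f(J)\subset J$ translates, under $\log$, to $g(\log J)\subset\log J$, so $g$ is indeed a self-map of $\log J$.

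For the converse, given $g\colon I\to I$ satisfying \eqref{rownanien} and setting $f=\exp\circ g\circ\log$, the same induction yields $f^n=\exp\circ g^n\circ\log$ on $\exp I$, and substituting \eqref{rownanien} gives $f^n(x)=\exp\bigl(ng(\log x)-(n-1)\log x\bigr)=[f(x)]^n/x^{n-1}$. The inclusion $g(I)\subset I$ transports under $\exp$ to $f(\exp I)\subset\exp I$. No real obstacle arises here: the entire content of the lemma is the observation that the substitution $x=e^y$ conjugates the multiplicative relation \eqref{zoltan-equation} into the additive relation \eqref{rownanien}, which is why the authors describe the proof as immediate.
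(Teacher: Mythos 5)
Your proof is correct and is exactly the verification the paper has in mind: the authors omit it as ``immediate'', and your conjugation argument $f^n=\exp\circ g^n\circ\log$ plus injectivity of $\exp$ is the natural way to fill in those details. Nothing is missing; the self-mapping checks and the induction are all handled properly.
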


\begin{lemma}\label{lemma-decreasing}
If a continuous and decreasing function $g\colon I\to I$ satisfies equation \eqref{rownanien}, then $I=\R$. In particular, $g$ is surjective.
\end{lemma}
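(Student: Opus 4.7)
The plan is to shift $g$ so that it fixes $0$, and then exploit the identity
\[
g^n(x)-x=n(g(x)-x),
\]
obtained by rearranging \eqref{rownanien}; this rewrites the $n$-th iterate as a single-step affine update $g^n(x)=(1-n)x+ng(x)$, which is far easier to iterate than the original equation.

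First I would show that $g$ has a unique fixed point $p\in I$: the map $h(x)=g(x)-x$ is continuous and strictly decreasing, and a short analysis of its one-sided limits at the endpoints of $I$ (using $g(I)\subset I$) produces a zero. Replacing $g$ by $\tilde g(y)=g(y+p)-p$, which still satisfies \eqref{rownanien} on $I-p$, we may assume $g(0)=0$. By Lemma~\ref{lemma-injective} and monotonicity, $g(x)$ and $x$ have opposite signs whenever $x\neq 0$, and since $g(I)\subset I$, the interval $I$ must contain points on both sides of $0$ (otherwise $g(x)$ could not return to $I$).

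Next, I would fix $x_0\in I\setminus\{0\}$ and set $y_k=g^{kn}(x_0)$. The identity gives $y_{k+1}=(1-n)y_k+ng(y_k)$; since $1-n<0$ and $y_k,\,g(y_k)$ have opposite signs, the two summands share the sign $-\operatorname{sgn}(y_k)$, so
\[
\operatorname{sgn}(y_{k+1})=-\operatorname{sgn}(y_k)\qquad\text{and}\qquad |y_{k+1}|=(n-1)|y_k|+n|g(y_k)|>(n-1)|y_k|.
\]
Hence $|y_k|>(n-1)^k|x_0|\to\infty$ (using $n\ge 3$). On the other hand $y_{k+1}=g^n(y_k)$, and because $g(0)=0$ with $g$ decreasing, $\operatorname{sgn}(g^n(y_k))=(-1)^n\operatorname{sgn}(y_k)$. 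For $n$ even this contradicts the alternation computed above whenever $y_k\neq 0$, so no decreasing solution exists and the lemma is vacuously true. For $n$ odd the two sign patterns agree, and starting from $x_0>0$ we obtain $y_{2k}\to+\infty$ and $y_{2k+1}\to-\infty$ while every $y_k$ lies in $I$; since $I$ is an interval, $I=\R$.

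For the surjectivity part, I would suppose $g(\R)=(L_-,L_+)$ with $L_+<\infty$. Then $g^n(\R)\subseteq g(\R)$ is bounded above by $L_+$, but letting $x\to-\infty$ in $g^n(x)=ng(x)-(n-1)x$ shows $g^n(x)\to+\infty$ (the term $-(n-1)x$ dominates the bounded $ng(x)$), a contradiction; symmetrically $L_-=-\infty$, so $g$ is surjective. The main obstacle is really the sign-and-magnitude bookkeeping of the third paragraph; the key trick is the identity $g^n(x)-x=n(g(x)-x)$, which bypasses the spectral analysis of the full $n$-th order recurrence and delivers a clean exponential growth estimate for $(y_k)$.
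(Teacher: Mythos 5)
Your argument is correct, but it follows a genuinely different route from the paper. The paper argues indirectly and locally at the endpoints: assuming $I\neq\R$ with $a=\inf I$, $b=\sup I$, it lets $x$ tend to an endpoint in the identity $g^n(x)=ng(x)-(n-1)x$ and reads off a contradiction in each of three cases ($a=-\infty$, $b=\infty$, both finite --- in the last case comparing $\inf g(I)$ with $nc-(n-1)b$); surjectivity is then just quoted from Remark \ref{remark-surjective}. You instead normalize at the (always existing, unique) fixed point of a decreasing continuous self-map, using the fact that \eqref{rownanien} is translation-invariant (its coefficients sum to zero, i.e.\ $1$ is a characteristic root), and then track the orbit $y_k=g^{kn}(x_0)$: the sign alternation $\operatorname{sgn}(y_{k+1})=-\operatorname{sgn}(y_k)$ together with $|y_{k+1}|>(n-1)|y_k|$ forces $I$ to contain points diverging to $+\infty$ and to $-\infty$, hence $I=\R$; your bookkeeping also shows that for even $n$ no decreasing solution exists at all (vacuous case), which matches the paper's later observation in the proof of Theorem \ref{zoltan-theorem} that \eqref{rownanien} forces $g$ to be increasing when $n$ is even. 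The paper's proof is shorter and needs no normalization or orbit estimates; yours is self-contained (you prove surjectivity on $\R$ directly from the equation rather than citing \cite{zhang-zhang} via Remark \ref{remark-surjective}) and yields extra quantitative information, namely geometric divergence of the orbits of $g^n$ at rate $n-1$. The two steps you only sketch --- existence of the fixed point via one-sided limits of $g(x)-x$, and invariance of \eqref{rownanien} under conjugation by a translation --- are routine and correct as stated.
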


\begin{proof}
For an indirect proof suppose that $I\neq\R$. Put $a=\inf I$ and $b=\sup I$. If it were $a=-\infty$, then
\[ \infty=\lim_{x\to a}(ng(x)-(n-1)x)=\lim_{x\to a}g^n(x)\le b<\infty, \]
a contradiction. Similarly, if it were $b=\infty$, then
\[ -\infty<a\le\lim_{x\to b}g^n(x)=\lim_{x\to b}(ng(x)-(n-1)x)=-\infty, \]
a contraciction. Thus $a,b\in\R$. Put $c=\inf g(I)$. Since $g$ is decreasing, we have $a\le c<b$. Further, we obtain
\[ c\le\lim_{x\to b}g^n(x)=\lim_{x\to b}(ng(x)-(n-1)x)=nc-(n-1)b, \]
which is an obvious contradiction to $c<b$.

For the part \textit{in particular} see Remark \ref{remark-surjective}.
\end{proof}

The characteristic equation of \eqref{rownanien} is of the form
\begin{equation}\label{zoltan-characteristic}
r^n=nr-n+1.
\end{equation}
Since
\[ r^n-nr+n-1=(r-1)^2\left(\sum_{k=1}^{n-1}kr^{n-1-k}\right), \]
it follows that $1$ is a root of equation \eqref{zoltan-characteristic} of multiplicity $2$. We will need two lemmas on the behaviour of the roots of this equation.

\begin{lemma}\label{behavior1}
\begin{enumerate}[\rm (i)]
\item If $n\in2\N$, then equation \eqref{zoltan-characteristic} has no root in  $\R\setminus\{1\}$.
\item If $n\in 2\N+1$, then equation \eqref{zoltan-characteristic} has exactly one root $r_0\in\R\setminus\{1\}$. Moreover, $r_0$ is a single root and if $z\in\C\setminus\R$ is a root of equation \eqref{zoltan-characteristic}, then $|z|<-r_0$.
\end{enumerate}
\end{lemma}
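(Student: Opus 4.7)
The plan is to analyze the polynomial $P(r)=r^n-nr+(n-1)$ directly via its derivative $P'(r)=n(r^{n-1}-1)$, and to control the complex roots through a triangle-inequality estimate applied to the defining identity $z^n=nz-(n-1)$.

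For part (i), with $n$ even, $n-1$ is odd, so $P'$ has the unique real zero $r=1$; since $P(r)\to+\infty$ at both ends, this is a global minimum, and $P(1)=0$ forces $P(r)>0$ for every $r\neq 1$. For part (ii), with $n$ odd, $n-1$ is even, hence $P'$ vanishes at $r=\pm 1$. A routine sign analysis shows that $P$ increases on $(-\infty,-1]$ from $-\infty$ up to $P(-1)=2(n-1)>0$, decreases on $[-1,1]$ down to $P(1)=0$, and then increases on $[1,\infty)$. This immediately yields a unique real root $r_0\in(-\infty,-1)$, and since $P'(r_0)>0$ it is simple.

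To bound the non-real roots, set $R=-r_0>1$. Using $r_0^n=nr_0-(n-1)$ together with the oddness of $n$, one obtains the key identity $R^n=nR+(n-1)$. For an arbitrary root $z\in\C$ of $P$, applying the triangle inequality to $z^n=nz-(n-1)$ gives $|z|^n\le n|z|+(n-1)$. The auxiliary function $h(r)=r^n-nr-(n-1)$ satisfies $h(0)=-(n-1)<0$, is decreasing on $[0,1]$ and strictly increasing on $[1,\infty)$, with $h(R)=0$; hence $h(|z|)\le 0$ forces $|z|\le R$. In the boundary case $|z|=R$, equality holds in $|nz-(n-1)|\le n|z|+(n-1)$, so $nz$ and $-(n-1)$ must be non-negative real multiples of each other; this means $z$ is a non-positive real number of modulus $R$, so $z=-R=r_0$. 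Consequently every non-real root of $P$ satisfies $|z|<R=-r_0$.

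The chief obstacle I anticipate is this last modulus bound: parts (i) and (ii) reduce to a straightforward one-variable calculus analysis, but establishing strict inequality for non-real roots requires recognising that the triangle-inequality estimate is sharp exactly when the root is a non-positive real, and that the only such candidate of maximal modulus is $r_0$ itself.
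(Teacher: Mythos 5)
Your proof is correct. Parts (i) and (ii) follow the same calculus analysis of $f(x)=x^n-nx+n-1$ as the paper, but your treatment of the modulus bound for non-real roots is genuinely different from, and more elementary than, the paper's. The paper (for odd $n=2k+1\ge5$) invokes Vieta's formulas to relate $r_0$ to the product of the non-real roots, rewrites the characteristic equation as $r_0^{2k}=2k+1+|zw_1\cdots w_{k-2}|^2$ and $z^{2k}=2k+1+r_0\,\overline{z}\,|w_1\cdots w_{k-2}|^2$, and derives a contradiction from $-r_0\le|z|$ via the triangle inequality; this entangles the estimate with all the other roots and requires singling out $n=5$ and implicitly excluding $n=3$ (where the claim is vacuous). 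You instead compare any root $z$ directly with $R=-r_0$ through the auxiliary function $h(r)=r^n-nr-(n-1)$: the identity $R^n=nR+(n-1)$ (from $r_0^n=nr_0-(n-1)$ and $n$ odd), the estimate $|z|^n\le n|z|+(n-1)$, and the monotonicity of $h$ on $[1,\infty)$ give $|z|\le R$, and the equality case of the triangle inequality (equality forces $nz$ to be a non-negative multiple of $-(n-1)$, hence $z$ real and non-positive, so $z=r_0$) yields the strict inequality for non-real $z$. Your route buys independence from the other roots, a uniform argument for all odd $n\ge3$, and a cleaner identification of when the bound is attained; the paper's route is a short global argument once Vieta's relation is written down, but at the cost of the case restriction and a less transparent source of strictness.
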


\begin{proof}
Define a function $f\colon\R\to\R$ by $f(x)=x^n-nx+n-1$. Then
$f'(x)=nx^{n-1}-n$ and $f''(x)=n(n-1)x^{n-2}$ for every $x\in\R$.

{\rm (i)} If $n$ is even, then the function $f$ is convex and has the global minimum at the point $x=1$. Hence $f(x)>f(1)=0$ for every $x\neq 1$.

{\rm (ii)} If $n$ is odd, then the function $f$ has a local maximum at the point $x=-1$ and a local minimum at the point $x=1$. Hence there exists a unique point $r_0\neq 1$ such that $f(r_0)=0$. Clearly, $r_0<-1$. Since $f'(r_0)\neq 0$, we conclude that $r_0$ is a single root of equation \eqref{zoltan-characteristic}.

Let $n=2k+1\ge 5$ with $k\in\N$. Consider a root $z\in\C\setminus\R$ of equation \eqref{zoltan-characteristic} and set all complex roots of this equation in the following sequence $(1,1,r_0,z,\xbar{z},w_1,\xbar{w_1},...,w_{k-2},\xbar{w_{k-2}})$; if $n=5$, then the sequence consists only of first five elements. By Vieta's formulas we have $r_0|z w_1\ldots w_{k-2}|^2=-2k$. This jointly with \eqref{zoltan-characteristic} yields
\[ r_0^{2k}=2k+1+|z w_1\ldots w_{k-2}|^2\quad\mbox{ and }\quad z^{2k}=2k+1+r_0\:\xbar{z}\:|w_1\ldots w_{k-2}|^2. \]
Suppose that, contrary to our claim, $-r_0\le |z|$. Then
\begin{align*}
|z|^{2k} &=\big|2k+1+r_0\:\xbar{z}\:|w_1\ldots w_{k-2}|^2\big|<2k+1-r_0|\:\xbar{z}\:|\!\cdot\!|w_1\ldots w_{k-2}|^2\\
&\le2k+1+|z w_1\ldots w_{k-2}|^2=r_0^{2k},
\end{align*}
a contradiction.
\end{proof}

\begin{lemma}\label{behavior2}
If $z\in\C\setminus\R$ is a root of equation \eqref{zoltan-characteristic}, then $|z|>1$.
\end{lemma}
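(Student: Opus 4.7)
The plan is to extract all the needed information from just the imaginary part of the characteristic equation \eqref{zoltan-characteristic}. Write a non-real root $z$ in polar form as $z = re^{\rmi\theta}$; here $r = |z| > 0$ (since $z = 0$ would force $-(n-1) = 0$, contradicting $n \ge 3$) and $\sin\theta \ne 0$ (since $z \notin \R$). Substituting into $z^n = nz - (n-1)$ and equating imaginary parts yields
\[
r^n \sin(n\theta) = nr\sin\theta,
\]
so, after dividing by $r$,
\[
r^{n-1}\sin(n\theta) = n\sin\theta.
\]
Because $\sin\theta \ne 0$ and $n > 0$, this forces $\sin(n\theta) \ne 0$ as well, and passing to absolute values gives the clean identity
\[
r^{n-1} = \frac{n\,|\sin\theta|}{|\sin(n\theta)|}.
\]

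At this point the lemma reduces entirely to the classical trigonometric estimate
\[
|\sin(n\theta)| < n\,|\sin\theta| \qquad \text{whenever } \sin\theta \ne 0,
\]
which I would verify by a short induction on $n$: the addition formula $\sin(n\theta) = \sin((n-1)\theta)\cos\theta + \cos((n-1)\theta)\sin\theta$ and the triangle inequality give $|\sin(n\theta)| \le |\sin((n-1)\theta)| + |\sin\theta| \le (n-1)|\sin\theta| + |\sin\theta| = n|\sin\theta|$. Equality throughout would require $|\cos\theta| = 1$, hence $\sin\theta = 0$, which is excluded, so the inequality is strict under our hypothesis.

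Plugging the strict bound into the displayed expression for $r^{n-1}$ yields $r^{n-1} > 1$, and therefore $|z| = r > 1$, as claimed. I do not foresee any real obstacle: the entire content of the lemma is packaged in this elementary Chebyshev-type inequality, and the only other ingredient is equating imaginary parts in \eqref{zoltan-characteristic}, which costs nothing.
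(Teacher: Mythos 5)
Your proposal is correct and follows essentially the same route as the paper: take the imaginary part of \eqref{zoltan-characteristic} to get $|z|^{n-1}=n|\sin\varphi|/|\sin n\varphi|$ and then prove $|\sin n\varphi|<n|\sin\varphi|$ (for $\sin\varphi\neq0$) by induction via the addition formula. The only cosmetic difference is how strictness is tracked in the induction — the paper carries the strict inequality from the base case $|\sin 2\varphi|=2|\sin\varphi||\cos\varphi|<2|\sin\varphi|$ through each step, which is slightly cleaner than your ``equality throughout forces $|\cos\theta|=1$'' remark, but the argument is the same.
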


\begin{proof}
Let $z=|z|(\cos\varphi+\rmi\sin\varphi)$. Since $z\notin\R$, we have $|z|>0$ and $\sin\varphi\neq 0$. By \eqref{zoltan-characteristic} we conclude that $|z|^n\sin n\varphi=|z| n\sin\varphi$, and hence that
\[ |z|=\sqrt[n-1]{\frac{n\sin\varphi}{\sin n\varphi}}. \]

Now it is enough to show that 
\begin{equation}\label{zoltan2}
|\sin n\varphi|<n|\sin \varphi|.
\end{equation}
We prove, by induction, that \eqref{zoltan2} holds for all integers $n\ge 2$ and reals $\varphi$ such that $\sin\varphi\neq0$ or, equivalently, $|\cos\varphi|<1$.

Clearly, $|\sin 2\varphi|=2|\sin\varphi\cos\varphi|<2|\sin\varphi|$. Consider $n\ge 2$ and assume that \eqref{zoltan2} holds for every  $\varphi$ with $\sin\varphi\neq0$. Then
\begin{align*}
|\sin (n+1)\varphi| &=|\sin(n\varphi+\varphi)|\leq |\sin n\varphi\cos\varphi+\cos n\varphi\sin\varphi|\\
&\le|\sin n\varphi\cos\varphi|+|\cos n\varphi\sin\varphi|<|\sin n\varphi|+|\sin\varphi|\\
&<n|\sin \varphi|+|\sin \varphi|=(n+1)|\sin\varphi|,
\end{align*}
which completes the proof.
\end{proof}

Now we can formulate and prove the main result of this section.
\begin{theorem}\label{zoltan-theorem}
If a continuous function $g\colon I\to I$ satisfies equation \eqref{rownanien}, then either it satisfies the equation
\begin{equation}\label{zoltan-increasing}
g^2(x)-2g(x)+x=0
\end{equation}
in the case where $g$ is increasing or it satisfies the equation
\begin{equation}\label{zoltan-decreasing}
g^2(x)-(r_0+1)g(x)+r_0 x=0,
\end{equation}
where $r_0$ stands for the negative root of the polynomial $r^n-nr+(n-1)$, in the case where $g$ is decreasing.
\end{theorem}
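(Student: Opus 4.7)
The plan is to view \eqref{rownanien} as a polynomial-like iterative equation with characteristic polynomial $r^n-nr+(n-1)=(r-1)^2q(r)$ and to reduce it, via the results of Sections \ref{nonreal-elimination} and \ref{opposite-elimination}, to an equation of order two. Lemmas \ref{behavior1} and \ref{behavior2} completely describe the roots of \eqref{zoltan-characteristic}: $1$ is a double root; if $n$ is odd there is additionally a simple real root $r_0<-1$; and every non-real root $\mu_j$ satisfies $|\mu_j|>1$ and, when $n$ is odd, also $|\mu_j|<|r_0|$. By Lemma \ref{lemma-injective} and continuity $g$ is strictly monotone, and comparing the monotonicity of the two sides of \eqref{rownanien} shows that a decreasing $g$ forces $n$ to be odd. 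Three cases then remain.

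In the case where $g$ is increasing and $n$ is even, the real-root list $\{(1,2)\}$ lies strictly below every non-real root in modulus, so Theorem \ref{complex-thm1} reduces \eqref{rownanien} directly to \eqref{zoltan-increasing}. In the case where $g$ is increasing and $n$ is odd, I would first apply Theorem \ref{theorem-opposite1a} with $r_1=1$ (positive, multiplicity $2$), $r_2=r_0$ (negative, multiplicity $1$), and the non-real roots in the role of the intermediate $\lambda$'s; since $r_2<0$ already, no passage to the dual is required and, by Remark \ref{remark-opposite}, surjectivity of $g$ is not needed. This eliminates $r_0$, leaving the root set $\{(1,2),\mu_j\}$, and Theorem \ref{complex-thm1} applied once more yields \eqref{zoltan-increasing}.

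For decreasing $g$, Lemma \ref{lemma-decreasing} furnishes both $I=\R$ and the surjectivity of $g$. I would apply Theorem \ref{theorem-opposite1b}(ii) with the assignment $r_1=1$ (positive, multiplicity $2$, equal to the required value $1$), $r_2=r_0$ (negative, multiplicity $1$), and the $\mu_j$ as intermediate $\lambda$'s; this produces a reduced equation with characteristic set $\{(1,1),(r_0,1),\mu_j\}$. Corollary \ref{complex-cor2} is then invoked with the distinguished $(1,1)$, the real root $r_0$ as a one-element $\lambda$-list, and the $\mu_j$ as strictly smaller non-real roots, eliminating the $\mu_j$ and giving the characteristic set $\{(1,1),(r_0,1)\}$, which is exactly \eqref{zoltan-decreasing}.

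The main obstacle is the decreasing case: the double root at $1$ cannot be dealt with by a single application of Theorem \ref{complex-thm2} or Theorem \ref{theorem-oppositeii}, which either require $1$ to be simple in the input or require two additional real roots of opposite sign beyond the distinguished $(1,1)$. The workaround is to insert Theorem \ref{theorem-opposite1b}(ii) as the initial reduction step, since its anti-monotonicity argument accommodates an arbitrary multiplicity of the positive root $1$ and collapses it to multiplicity one, thereby preparing the ground for Corollary \ref{complex-cor2} to finish the elimination of the non-real roots.
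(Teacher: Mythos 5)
Your proposal is correct and follows essentially the same route as the paper: even $n$ forces $g$ increasing and Theorem \ref{complex-thm1} applies directly; for odd $n$ and increasing $g$ one uses Theorem \ref{theorem-opposite1a} with Remark \ref{remark-opposite} and then Theorem \ref{complex-thm1}; for decreasing $g$ one uses Lemma \ref{lemma-decreasing} and Theorem \ref{theorem-opposite1b}(ii) followed by the elimination of the non-real roots. Your only deviation is citing Corollary \ref{complex-cor2} rather than Theorem \ref{complex-thm2} in the last step, which is in fact the more precise reference (it is exactly Theorem \ref{complex-thm2} applied to the dual equation, and surjectivity is available from Lemma \ref{lemma-decreasing}), since here $|r_0|$ exceeds the moduli of the non-real roots.
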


\begin{proof}
If $n$ is an even number, then by Lemma \ref{behavior1} the only real root of \eqref{zoltan-characteristic} is $1$ (whose multiplicity equals $2$) and by Lemma \ref{behavior2} all non-real roots have modulus greater than $1$. Theorem \ref{complex-thm1} yields \eqref{zoltan-increasing} to hold. In this case equation \eqref{rownanien} forces directly $g$ to be increasing.

If $n$ is an odd number, then by Lemma \ref{behavior1} equation \eqref{zoltan-characteristic} has two real roots: $1$ of multiplicity $2$ and a single negative root $r_0$. Assuming $g$ to be increasing and using Theorem \ref{theorem-opposite1a} (and also Remark \ref{remark-opposite}) we can eliminate $r_0$ from characteristic equation \eqref{zoltan-characteristic}. Furthermore, using Theorem \ref{complex-thm1}, we can eliminate all non-real roots. Consequently, \eqref{zoltan-increasing} holds. If $g$ is decreasing, then by Lemma \ref{lemma-decreasing} it is surjective. Now using Theorems \ref{theorem-opposite1b} and \ref{complex-thm2}, we eliminate single root $1$ and all non-real roots from \eqref{zoltan-characteristic}. Finally, we conclude that \eqref{zoltan-decreasing} holds.
\end{proof}

The next theorem extends Theorem \ref{dm} and solves the problem of Zolt\'an Boros in the general case.
\begin{theorem}
Assume $J\subset(0,\infty)$ to be an interval and let $f\colon J\to J$ be a continuous solution to equation \eqref{zoltan-equation}.
\begin{enumerate}[\rm(i)]
\item If $J$ is bounded and $0\notin\mathrm{cl}\,J$, then $f(x)=x$ for every $x\in J$.
\item If $J$ is bounded and $0\in\mathrm{cl}\,J$, then there exists $c\in(0,1]$ such that
\begin{equation}\label{draga-morawiecii-new}
f(x)=cx\quad\mbox{ for every }x\in J.
\end{equation}
\item If $J$ is unbounded and $0\notin\mathrm{cl}\,J$, then there exists $c\in[1,\infty)$ such that \eqref{draga-morawiecii-new} holds.
\item If $J=(0,\infty)$ and $n$ is even, then there exists $c\in(0,\infty)$ such that \eqref{draga-morawiecii-new} holds.
\item If $J=(0,\infty)$ and $n$ is odd, then there exists $c\in(0,\infty)$ such that either \eqref{draga-morawiecii-new} holds or
\[ f(x)=cx^{r_0}\quad\mbox{ for every }x\in(0,\infty), \]
where $r_0$ is the negative root of \eqref{zoltan-characteristic}.
\end{enumerate}
\end{theorem}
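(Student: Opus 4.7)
The strategy is to lift the entire problem through the substitution $g=\log\circ f\circ\exp$ of Lemma \ref{lemma-substitution}, reducing \eqref{zoltan-equation} on $J$ to \eqref{rownanien} on $I=\log J$, then invoke Theorem \ref{zoltan-theorem} to collapse \eqref{rownanien} into one of the two second-order equations \eqref{zoltan-increasing} or \eqref{zoltan-decreasing}, and finally solve each of those on the relevant shape of $I$. By Lemma \ref{lemma-injective} every continuous solution $g$ of \eqref{rownanien} is strictly monotone; by Lemma \ref{lemma-decreasing} a decreasing $g$ can occur only when $I=\R$, i.e.\ $J=(0,\infty)$. Together with Theorem \ref{zoltan-theorem}, which also excludes the decreasing case when $n$ is even, this confines the decreasing branch to case (v); in all other cases $g$ is increasing.

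For the increasing branch, Theorem \ref{zoltan-theorem} yields $g^{2}(x)-2g(x)+x=0$. By Lemma \ref{lemma-recurrence}, each orbit $(g^{m}(x_{0}))_{m\ge 0}$ is an arithmetic progression $x_{0}+m(g(x_{0})-x_{0})$ that must stay in $I$. Writing $d(x)=g(x)-x$, one reads off from the equation that $d\circ g=d$, and from $x<y\Rightarrow g^{m}(x)<g^{m}(y)$ for every $m$ that $d$ is non-decreasing on $I$. These two properties plus continuity force $d$ to be globally constant, so $g(x)=x+c$ for a single real $c$. The admissible range of $c$ is then determined by $I$: $c=0$ if $I$ is bounded (case (i)); $c\le 0$ if $I$ is unbounded only below (case (ii)); $c\ge 0$ if $I$ is unbounded only above (case (iii)); any $c\in\R$ if $I=\R$ (case (iv) and the first alternative in (v)). Pushing forward through $\exp$ turns $g(x)=x+c$ into $f(x)=e^{c}x$, which gives the stated ranges for the positive constant.

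For the decreasing branch in case (v) we have $I=\R$ and $g$ satisfies \eqref{zoltan-decreasing}, a second-order polynomial-like equation with real characteristic roots $1$ and $r_{0}<-1$. Since $g$ is a continuous decreasing surjection of $\R$ (Lemma \ref{lemma-decreasing}), Nabeya's classification \cite{nabeya} for the $N=2$ case (cf.\ also \cite{morawiec}) forces $g(x)=r_{0}x+c$ for some $c\in\R$. Translating back via $f=\exp\circ g\circ\log$ yields $f(x)=e^{c}x^{r_{0}}$, which after renaming $e^{c}$ is exactly the second alternative in (v).

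The main obstacle is the rigidity step in the increasing case: upgrading the orbit-wise constancy of $d$ to global constancy. On $I=\R$ this is immediate once one has both the $g$-invariance and the monotonicity of $d$, but on half-lines some care is needed---one has to rule out, using continuity, that $d$ simultaneously takes the values $0$ and a non-zero constant on disjoint sub-intervals. The decreasing branch rests squarely on \cite{nabeya}; the final distribution of the formulas across (i)--(v) is then bookkeeping, dictated by which values of $c$ make $x\mapsto x+c$, respectively $x\mapsto r_{0}x+c$, a self-map of $I$.
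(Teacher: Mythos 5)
Your proposal is correct in outline, and for the increasing branch it takes a genuinely different route from the paper. After the common steps (the logarithmic substitution, Lemma \ref{lemma-decreasing} confining decreasing solutions to $J=(0,\infty)$, and Theorem \ref{zoltan-theorem} reducing to \eqref{zoltan-increasing} or \eqref{zoltan-decreasing}), the paper does \emph{not} solve $g^2(x)-2g(x)+x=0$ on a general interval directly: it observes that $(r-1)^2$ divides $r^3-3r+2$, so any solution of \eqref{zoltan-increasing} satisfies \eqref{rownanien} with $n=3$, hence $f$ satisfies the original Boros equation for $n=3$, and then it simply quotes Theorem \ref{dm} (the authors' earlier $n=3$ result) to get $f(x)=cx$ with the stated ranges of $c$. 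You instead re-derive the translation form $g(x)=x+c$ from scratch via the displacement $d=g-\mathrm{id}$: the identity $d\circ g=d$, the inequality $x+md(x)<y+md(y)$ for all $m$ giving that $d$ is non-decreasing, and then constancy. This is more self-contained (it does not lean on Theorem \ref{dm}), and your two structural facts are exactly the right ones; but the final rigidity step, which you yourself flag as the main obstacle, is left unproved, and as written it is the only real gap. It can be closed: if $d(x)=v>0$ then the orbit $x+mv$ stays in $I$ and tends to $+\infty$, so monotonicity of $d$ squeezes $d\equiv v$ on $I\cap[x,\infty)$ (symmetrically, $v<0$ propagates to the left half-line); hence any nonzero value of $d$ is attained only as a locally constant plateau filling a half-line, and the intermediate value theorem then forbids $d$ from taking two distinct values at all, on $\R$ as well as on half-lines and bounded intervals. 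With that lemma written out, your argument is complete; the decreasing branch of (v) is handled exactly as in the paper, by citing Nabeya for \eqref{zoltan-decreasing} with roots $1$ and $r_0<0$ and decreasing $g$ on $\R$, and your bookkeeping of the admissible constants $c$ against the shape of $I=\log J$ matches the paper's case distinctions (i)--(v).
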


\begin{proof}
By substitution $g=\log\circ f\circ\exp$ and Lemma \ref{lemma-substitution} we can reduce equation \eqref{zoltan-equation} to \eqref{rownanien}, where $g$ acts from $I=\log J$ into $I$.

In assertions (i)--(iii), by Lemma \ref{lemma-decreasing}, $g$ is increasing. By Theorem \ref{zoltan-theorem} $g$ satisfies \eqref{zoltan-increasing}. Since $r^2-2r+1$ divides $r^3-3r+2$, the function $g$ satisfies \eqref{rownanien} with $n=3$. Consequently, $f$ satisfies \eqref{zoltan-equation} with $n=3$. Therefore, by Theorem \ref{dm}, $f(x)=cx$ for some positive $c$. Since $f$ ranges in $J$, it forces $c=1$ in case (i), $c\in(0,1]$ in case (ii) and $c\in[1,\infty)$ in case (iii). One may easily verify that each of these functions solves \eqref{zoltan-equation}.

In assertion (iv), by equation \eqref{rownanien} $g$ is increasing and by Theorem \ref{zoltan-theorem} it satisfies \eqref{zoltan-increasing}. The same reasoning as above shows that $f$ is given by \eqref{draga-morawiecii-new}. One may easily verify that such an $f$ solves \eqref{zoltan-equation} for arbitrary positive $c$.

In assertion (v) the increasing solutions are obtained exactly in the same manner as in the preceding item. Assume $g$ to be decreasing. Then, by Theorem \ref{zoltan-theorem}, it satisfies \eqref{zoltan-decreasing}. According to \cite{nabeya}, we conclude that $g(x)=r_0x+c_0$ for some constant $c_0$ and, consequently, $f(x)=cx^{r_0}$ for some positive $c$. Again such an $f$ solves \eqref{zoltan-equation} for arbitrary $c\in(0,\infty)$.
\end{proof}

\section{Questions and remarks}
By \cite{matkowski-zhang2} and \cite{nabeya} if characteristic equation \eqref{characteristic-equation} has at least two distinct positive roots or two real opposite roots, then solution to \eqref{main-equation} is not unique (it depends on an arbitrary function). Results from Sections \ref{nonreal-elimination} and \ref{opposite-elimination} allow us to determine solutions to \eqref{main-equation}, provided that they are unique, in many cases. Of particular interest is the situation, still unsolved in full generality, when the characteristic equation has non-real roots. In view of our results it is natural to ask the following question.

\begin{problem}
Set $\varrho=\max\{|r|\colon r\mbox{ is a root of }\eqref{characteristic-equation}\}$. Is it possible to eliminate non-real roots with modulus $\varrho$ from characteristic equation \eqref{characteristic-equation} when this equation has also a real root with modulus $\varrho$?
\end{problem}

It seems that non-real roots of the characteristic equation do not affect solutions to polynomial-like iterative equations in any case, but this problem is still open (see \cite{zhang-zhang}).

\subsection*{Acknowledgement}
The research was supported by the University of Silesia Mathematics Department (Iterative Functional Equations and Real Analysis program).

\end{document}